\numberwithin{equation}{section}
\newcommand{\h}{\mathcal{H}}
\newcommand{\ka}{\mathcal{K}}
\newcommand{\Kt}{K_\Theta}
\newcommand{\bh}{B({\mathcal H})}
\def\kda{K_\alpha}
\def\kdt{K_\theta}
\newtheorem{claim}{claim}[section]
\newtheorem{theorem}[claim]{Theorem}
\newtheorem{lemma}[claim]{Lemma}
\newtheorem{proposition}[claim]{Proposition}
\newtheorem{corollary}[claim]{Corollary}
\theoremstyle{definition}
\newtheorem{example}[claim]{Example}
\title{Conjugations in $L^2$ and their invariants}
\author[M. C. C\^amara, K. Kli\'s--Garlicka, B. \L anucha, and M. Ptak]{M. Cristina C\^amara, Kamila Kli\'s--Garlicka, Bartosz \L anucha, and Marek Ptak}
\thanks{The work of the first author was partially supported by FCT/Portugal
through UID/MAT/04459/2013 and the research of the second and the fourth authors was financed by the Ministry of Science and Higher Education of the Republic of Poland}
\address{M. Cristina C\^amara, Center for Mathematical Analysis, Geometry and Dynamical Systems, Mathematics Department, Instituto Superior T'ecnico, Universidade de Lisboa, Av. Rovisco Pais, 1049--001 Lisboa, Portugal}
\email{ccamara@math.ist.utl.pt}
\address{Kamila Kli\'s-Garlicka, Department of Applied Mathematics, University of Agriculture, ul. Balicka 253c, 30-198 Krak\'ow, Poland}
\email{rmklis@cyfronet.pl}
\address{Bartosz \L anucha, Department of Mathematics,  Maria Curie-Sk\l odowska University, Maria Curie-Sk\l o\-dow\-ska Square 1, 20-031 Lublin, Poland}
\email{bartosz.lanucha@poczta.umcs.lublin.pl}
\address{Marek Ptak, Department of Applied Mathematics, University of Agriculture, ul. Balicka 253c, 30-198 Krak\'ow, Poland}
\email{rmptak@cyf-kr.edu.pl}
\subjclass[2010]{Primary 47B35, Secondary 47B32, 30D20}
\begin{document}
\begin{abstract}{Conjugations in  space $L^2$ of the unit circle commuting with multiplication by $z$ or intertwining multiplications by $z$ and $\bar z$  are characterized. We also study their behaviour with respect to the Hardy space, subspaces invariant for the unilateral shift and model spaces.}
\end{abstract}
\keywords{conjugation, $C$--symmetric operator, Hardy space, model space, invariant subspaces for the unilateral shift, truncated Toeplitz operator.}
\maketitle

\section{Introduction}
Let $\h$ be a complex Hilbert space and denote by $\bh$ the algebra of all bounded linear operators on $\h$. A {\it conjugation} $C$  in $\h$ is an antilinear isometric involution, i.e., $C^2=id_{\h}$ and
\begin{equation}\label{e1}
  \langle Cg,Ch \rangle=\langle h,g\rangle \quad \text{ for } g,h\in\h.
\end{equation}
Conjugations have recently been intensively studied and the roots of this subject comes from physics.
An operator $A\in\bh$ is called {\it $C$--symmetric} if $CAC=A^*$ (or equivalently $AC=CA^*$).
A strong motivation to study conjugations comes from the study of complex symmetric operators, i.e., those operators that are $C$--symmetric with respect to some conjugation $C$. For references see for instance \cite{GP,GPP, GP2, CKP, CFT, KoLee18}. Hence obtaining the full description of  conjugations with certain properties is of great interest.

Let $\mathbb{T}$ denote the unit circle, and let $m$ be the normalized Lebesgue measure on $\mathbb{T}$. Consider the spaces $L^2=L^2(\mathbb{T,}m)$, $L^{\infty}=L^\infty(\mathbb{T},m)$, the classical Hardy space $H^2$ on the unit disc $\mathbb{D}$ identified with a subspace of $L^2$, and the Hardy space $H^\infty$ of all analytic and bounded functions in $\mathbb{D}$ identified with a subspace of $L^\infty$.
Denote by $M_\varphi$ the operator defined on $L^2$ of  multiplication by a function $\varphi\in L^\infty$.

The most natural conjugation in $L^2$ is $J$ defined by $Jf=\bar f$, for $f\in L^2$. This conjugation has two natural properties:  the operator $M_z$ is $J$--symmetric, i.e., $M_z J=J M_{\bar z}$, and
$J$ maps an analytic function into a co-analytic one, i.e., $J H^2=\overline{H^2}$.

Another natural conjugation in $L^2$ is $J^{\star}f=f^{\#}$ with $f^{\#}(z)\overset{df}{=}\overline{ f(\bar z)}.$ The conjugation $J^{\star}$ has a different behaviour: it commutes with multiplication  by $z$ ($M_z J^\star=J^\star M_{ z}$) and leaves analytic functions invariant, $J^\star H^2\subset{H^2}$.
The map $J^{\star}$ appears for example in connection with Hankel operators (see \cite[pp. 146--147]{MAR}). Its connection with model spaces was studied in \cite{CGW} (Lemma 4.4, see also \cite[p. 37]{berc}). Hence a natural question is to characterize conjugations with respect to these properties. The first step was done in \cite{CKP} where all conjugations in $L^2$ with respect to which  the operator $M_z$ is $C$--symmetric  were characterized, see Theorem \ref{c11}.
In Section 2 we give a characterization of all
 conjugations which commute with $M_z$, Theorem \ref{c1}. In Section 3, using the above characterizations we show that there are no conjugations in $L^2$ leaving $H^2$ invariant, with respect to which the operator $M_z$ is $C$--symmetric. We also show  that $J^{\star}$ is the only conjugation commuting with $M_z$ and leaving $H^2$ invariant.

   Beurling's theorem makes  subspaces of $H^2 $ of the type $\theta H^2$ ($\theta$ inner function,  i.e., $\theta\in H^\infty$, $|\theta|=1$ a.e. on $\mathbb{T}$) exceptionally interesting, as the only invariant subspaces for the unilateral shift $S$, $Sf(z)=zf(z)$ for $f\in H^2$. On the other hand, model spaces (subspaces of the type $K_\theta=H^2\ominus \theta H^2$), which are invariant for the adjoint of the unilateral shift, are important in model theory, \cite{NF}.

In \cite{CKP} all conjugations $C$  {with respect to which} the operator $M_z$ is $C$--symmetric  and mapping a model space $K_\alpha$ into another  {model space} $K_\theta$ were  characterized, with the assumption that $\alpha$ divides $\theta$ ($\alpha\leqslant\theta$). {Recall that for $\alpha$ and $\theta$ inner, $\alpha\leqslant\theta$ means that $\theta/\alpha$ is also an inner function.} In what follows we will show that the result holds without the assumption $\alpha\leqslant\theta$.

In Section 4 conjugations {commuting with $M_z$ and} preserving model spaces are described. Section 5 is devoted to conjugations between $S$--invariant subspaces (i.e., subspaces of the form $\theta H^2$ with $\theta$ {an} inner function). In the last section we deal with conjugations commuting with the truncated shift $A^\theta_z$ {($A^\theta_z=P_{\theta}M_{z|K_{\theta}}$ where $P_{\theta}$ is the orthogonal projection from $L^2$ onto $K_{\theta}$)} or
conjugations  such that $A^\theta_z$ is $C$--symmetric with respect to them.

\section{$M_z$ and $M_z$--commuting conjugations in $L^2$.}

 Denote by $J$ the conjugation in $L^2$ defined as $Jf=\bar f$, for $f\in L^2$. This conjugation has the following obvious properties:
 \begin{proposition}\label{J}
\begin{enumerate}
\item $M_z J=J M_{\bar z}$;
  \item $M_\varphi J=JM_{\bar\varphi}$ for all $\varphi\in L^\infty$;
  \item $J H^2=\overline{H^2}$.
\end{enumerate}
\end{proposition}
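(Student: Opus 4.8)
The plan is to verify the three items directly from the defining formula $Jf=\bar f$, taking (2) as the substantive assertion and obtaining (1) as the special case $\varphi=z$. Here one uses that $\bar z=z^{-1}$ a.e.\ on $\mathbb{T}$, so that multiplication by $\bar z$ on $L^2$ is exactly $M_{\bar\varphi}$ with $\varphi=z$, and (1) becomes $M_zJ=JM_{\bar z}$. (That $J$ is a conjugation at all is immediate: $J^2f=\overline{\bar f}=f$, and $\langle Jg,Jh\rangle=\int_{\mathbb{T}}\bar g\,h\,dm=\overline{\langle g,h\rangle}=\langle h,g\rangle$; this is already presupposed in the statement.) For (2) I would simply compare the two operators pointwise a.e.\ on $\mathbb{T}$: for $f\in L^2$ and $\varphi\in L^\infty$,
$$(M_\varphi Jf)(z)=\varphi(z)\,\overline{f(z)}, \qquad (JM_{\bar\varphi}f)(z)=\overline{\,\overline{\varphi(z)}\,f(z)\,}=\varphi(z)\,\overline{f(z)},$$
so the two sides coincide for every $f$, which is all that is required.

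For (3) I would use the orthonormal basis $\{z^n:n\ge 0\}$ of $H^2$. Since $J$ is an antilinear isometric involution, it carries closed subspaces bijectively onto closed subspaces, and $Jz^n=\bar z^n=z^{-n}$; hence $JH^2=\overline{\operatorname{span}}\{z^{-n}:n\ge 0\}=\overline{H^2}$. Equivalently, in terms of Fourier coefficients one has $\widehat{\bar f}(n)=\overline{\hat f(-n)}$, so $\hat f(n)=0$ for $n<0$ forces $\widehat{\bar f}(n)=0$ for $n>0$, giving $JH^2\subseteq\overline{H^2}$, and applying $J$ once more yields the reverse inclusion. There is essentially no obstacle in this proposition; the only points deserving a moment's care are the antilinearity of $J$ when passing to linear spans and closures, and the identification $\bar z=z^{-1}$ on $\mathbb{T}$ used to read (1) off from (2).
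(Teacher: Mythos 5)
Your proof is correct. The paper states these properties without proof, calling them obvious, and your direct verification (pointwise computation for (2), specializing $\varphi=z$ for (1), and the Fourier-coefficient/basis argument for (3)) is exactly the routine check the authors intend.
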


  Let us consider all  conjugations $C$ in $L^2$ satisfying the condition
\begin{equation}\label{mz}
  M_z C=C M_{\bar z}.
\end{equation}
Such conjugations were  studied in \cite{CKP} and are called {\it $M_z$--conjugations}. The following theorem characterizes all $M_z$--conjuga\-tions in $L^2$.
\begin{theorem}[\cite{CKP}]\label{c11}
Let $C$ be a conjugation in $L^2$. Then the following are equivalent:
\begin{enumerate}
\item $M_z C=C M_{\bar z}$, 
\item $M_\varphi C=C M_{\bar\varphi}$ for all $\varphi\in L^\infty$,
 \item there is $\psi\in L^\infty$, with $|\psi|=1$, such that $C=M_\psi J$,
 \item there is $\psi^\prime\in L^\infty$, with $|\psi^\prime|=1$, such that $C=JM_{\psi^\prime}$.
 \end{enumerate}
\end{theorem}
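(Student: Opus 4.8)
The plan is to establish the cycle $(1)\Rightarrow(3)\Rightarrow(2)\Rightarrow(1)$ together with the equivalence $(3)\Leftrightarrow(4)$, so that the only implication carrying real content is $(1)\Rightarrow(3)$. The implication $(2)\Rightarrow(1)$ is immediate (take $\varphi=z$), and $(3)\Leftrightarrow(4)$ follows from Proposition~\ref{J}(2): since $M_\psi J=JM_{\bar\psi}$, one passes between the two forms with $\psi'=\bar\psi$, both unimodular.

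For $(1)\Rightarrow(3)$ the idea is to linearize $C$ by composing it with $J$. Set $U=CJ$. As a composition of two antilinear maps, $U$ is linear; as a composition of two surjective isometric involutions, $U$ is a surjective isometry, hence unitary (indeed $\langle Uf,Ug\rangle=\langle Jg,Jf\rangle=\langle f,g\rangle$). Next I would check that $U$ commutes with $M_z$: using \eqref{mz} and the identity $M_{\bar z}J=JM_z$ (a special case of Proposition~\ref{J}(1), verified directly since $\bar z\,\overline f=\overline{zf}$),
\[
M_zU=M_zCJ=CM_{\bar z}J=CJM_z=UM_z.
\]
Then I would invoke the classical description of the commutant of the bilateral shift on $L^2(\mathbb T)$: a bounded operator commuting with $M_z$ is of the form $M_\psi$ for some $\psi\in L^\infty$. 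Concretely, commutation with every $M_{z^n}$, $n\in\mathbb Z$, shows $U\varphi=\varphi\cdot(U1)$ for all trigonometric polynomials and then, by density and continuity, for all $\varphi\in L^\infty$; writing $\psi=U1$, boundedness of $U$ forces $\psi\in L^\infty$ (not merely $L^2$). Since $U$ is unitary, $|\psi|=1$ a.e. Finally $C=UJ^{-1}=M_\psi J$ because $J^2=\mathrm{id}$.

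For $(3)\Rightarrow(2)$ one first confirms that $C=M_\psi J$ with $|\psi|=1$ is genuinely a conjugation: it is visibly antilinear and isometric, and $C^2=M_\psi JM_\psi J=M_\psi M_{\bar\psi}J^2=M_{|\psi|^2}=\mathrm{id}$ by Proposition~\ref{J}(2). Then, for any $\varphi\in L^\infty$, again by Proposition~\ref{J}(2),
\[
M_\varphi C=M_\varphi M_\psi J=M_\psi M_\varphi J=M_\psi JM_{\bar\varphi}=CM_{\bar\varphi},
\]
which is $(2)$; and $(2)\Rightarrow(1)$ needs nothing.

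The \emph{main obstacle} is the appeal to the commutant of $M_z$ on $L^2(\mathbb T)$: since this is not recorded earlier in the paper, I would either cite a standard reference or include the short argument sketched above, being careful to justify that the candidate multiplier $\psi=U1$ lies in $L^\infty$ and that $U=M_\psi$ holds on all of $L^2$ and not just on a dense subspace. Everything else is routine manipulation with $J$ and multiplication operators, using only Proposition~\ref{J}.
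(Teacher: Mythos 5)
Your proof is correct and takes essentially the same route as the paper: Theorem \ref{c11} is quoted there from \cite{CKP} without proof, but the paper's proof of the companion Theorem \ref{c1} is exactly your argument, namely compose $C$ with the reference conjugation to get a linear operator commuting with $M_z$, identify it as a multiplication $M_\psi$ by the commutant description (\cite[Theorem 3.2]{RR}, which you can simply cite rather than re-derive), and obtain $|\psi|=1$ from the isometry property. The remaining implications are routine, as you note.
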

Another natural conjugation in $L^2$ is defined as
\begin{equation}\label{djstar}J^{\star}f=f^{\#}\ \text{ with }\ f^{\#}(z)\overset{df}{=}\overline{ f(\bar z)}.\end{equation}
The basic properties of $J^{\star}$ are the following:
\begin{proposition}\label{Jstar}
\begin{enumerate}
\item $M_z J^{\star}=J^{\star} M_{ z}$;
\item $M_{\bar z} J^{\star}=J^{\star} M_{\bar z}$;
  \item $M_\varphi J^{\star}=J^{\star}M_{\varphi^{\#}}$ for all $\varphi\in L^\infty$;
  \item $J^\star H^2=H^2$.
\end{enumerate}
\end{proposition}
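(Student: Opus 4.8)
The plan is to verify the four identities directly from the definition, exploiting the fact that on $\mathbb{T}$ the map $z\mapsto\bar z$ coincides with $z\mapsto z^{-1}$, a measure-preserving involution of the circle; composition with it preserves $L^2$-norms, so together with complex conjugation of values the formula $J^\star f=f^\#$ defines a well-defined antilinear isometry, and $(J^\star)^2=\mathrm{id}_{L^2}$ because $\overline{\bar z}=z$ on $\mathbb{T}$, so $J^\star$ is indeed a conjugation. For (1), for $f\in L^2$ and a.e.\ $z\in\mathbb{T}$ I would compute
\[
(J^\star M_z f)(z)=\overline{(M_zf)(\bar z)}=\overline{\bar z\,f(\bar z)}=z\,\overline{f(\bar z)}=(M_zJ^\star f)(z),
\]
where again $\overline{\bar z}=z$ since $|z|=1$.

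Item (2) is then obtained either by the same one-line computation with $M_{\bar z}$ in place of $M_z$, or, more quickly, by multiplying the operator identity $M_zJ^\star=J^\star M_z$ of (1) on the left and on the right by $M_{\bar z}$ and using $M_zM_{\bar z}=M_{\bar z}M_z=I$. For (3) the one thing to notice is that $\varphi^\#(\bar z)=\overline{\varphi(\overline{\bar z})}=\overline{\varphi(z)}$ for a.e.\ $z\in\mathbb{T}$; hence
\[
(J^\star M_{\varphi^\#}f)(z)=\overline{\varphi^\#(\bar z)\,f(\bar z)}=\varphi(z)\,\overline{f(\bar z)}=(M_\varphi J^\star f)(z),
\]
and (1) and (2) are recovered as the special cases $\varphi=z$ and $\varphi=\bar z$.

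For (4), the cleanest route is to observe that $(z^{n})^{\#}(z)=\overline{\bar z^{\,n}}=z^{n}$ for every $n\in\mathbb{Z}$, so $J^\star$ fixes every monomial and therefore acts on $f=\sum_{n}a_nz^{n}$ by conjugating Fourier coefficients, $J^\star f=\sum_{n}\overline{a_n}\,z^{n}$. In particular $J^\star$ maps the orthonormal basis $\{z^{n}:n\geqslant0\}$ of $H^2$ onto itself, so $J^\star H^2\subseteq H^2$, and applying $J^\star$ once more (using $(J^\star)^2=\mathrm{id}$) gives the reverse inclusion, whence $J^\star H^2=H^2$.

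There is \emph{no genuine obstacle} here: everything reduces to the elementary change of variable $z\mapsto\bar z$ on $\mathbb{T}$. The only points that require a little attention are keeping track of the antilinearity of $J^\star$ when moving conjugation bars around, and the repeated use of the identity $\overline{\bar z}=z$, which holds precisely because we are on the unit circle.
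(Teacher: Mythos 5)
Your verification is correct: each identity follows from the pointwise computation with the change of variable $z\mapsto\bar z$ on $\mathbb{T}$ (using $\overline{\bar z}=z$), and the Fourier-coefficient description of $J^\star$ together with $(J^\star)^2=\mathrm{id}$ settles $J^\star H^2=H^2$. The paper states these as basic properties without proof, and your argument is exactly the routine direct verification it implicitly relies on.
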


In the context of the
Proposition \ref{Jstar} it seems natural to consider all conjugations $C$ in $L^2$
commuting with $M_z$, i.e.
\begin{equation}\label{eq1}M_z C=C M_{ z}.
\end{equation}
Such conjugations  will be called {\it $M_z$--commuting}. In what follows we will often deal with  functions
 $f\in L^2$  such that  $f(z)=f(\bar z)$ {a.e. on $\mathbb{T}$}, which will be called {\it symmetric}. Observe that if $f$ is symmetric and $f\in H^2$, then we also have $f\in\overline{H^2}$ and so it is a constant function.

\begin{theorem}\label{c1}
Let $C$ be a conjugation in $L^2$. Then the following are equivalent:
\begin{enumerate}\item $ M_z C=C M_{z}$ ,
\item $M_\varphi C=C M_{\varphi^{\#}}$ for all $\varphi\in L^\infty$,
 \item there is a symmetric unimodular function $\psi\in L^\infty$ 
 such that $C=M_\psi J^{\star}$,
     \item there is a symmetric unimodular function $\psi^\prime\in L^\infty$ 
      such that $C=J^{\star}\,M_{\psi^\prime}$.
 \end{enumerate}
\end{theorem}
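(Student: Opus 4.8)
The plan is to prove the cyclic chain of implications $(1)\Rightarrow(3)\Rightarrow(4)\Rightarrow(2)\Rightarrow(1)$, modelling the argument on the proof of Theorem \ref{c11} but replacing $J$ by $J^\star$ and tracking the extra symmetry constraint. First, for $(1)\Rightarrow(3)$, I would set $\psi = C(1)$ and aim to show $\psi$ is unimodular, symmetric, and $C = M_\psi J^\star$. To that end, consider the operator $U = J^\star C$ (or $C J^\star$): since both $C$ and $J^\star$ are conjugations and both commute with $M_z$, the composition $U$ is a \emph{linear} isometry (in fact unitary) commuting with $M_z$, hence with $M_\varphi$ for all $\varphi\in L^\infty$ by the usual approximation/weak-$\star$ argument, so $U$ is multiplication by a unimodular function $u\in L^\infty$. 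Then $C = J^\star U = J^\star M_u$; using Proposition \ref{Jstar}(3) this rewrites as $C = M_{u^\#} J^\star$, so $\psi = u^\#$ is unimodular. The involution condition $C^2 = \mathrm{id}$ forces $M_\psi J^\star M_\psi J^\star = \mathrm{id}$, i.e. $M_\psi M_{\psi^\#} = \mathrm{id}$ (again by Proposition \ref{Jstar}(3)), which gives $\psi \cdot \psi^\# = 1$; combined with $|\psi|=1$ this yields $\psi^\# = \bar\psi = \psi$ a.e., i.e. $\psi$ is symmetric. (Here one uses $|\psi|=1 \Rightarrow \bar\psi = 1/\psi$, and that $f\mapsto f^\#$ and $f\mapsto \bar f$ agree exactly on symmetric functions.)

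For $(3)\Rightarrow(4)$: given $C = M_\psi J^\star$ with $\psi$ symmetric unimodular, I would push $M_\psi$ through $J^\star$ using Proposition \ref{Jstar}(3) to get $M_\psi J^\star = J^\star M_{\psi^\#} = J^\star M_\psi$ (symmetry of $\psi$ gives $\psi^\# = \psi$), so one may simply take $\psi' = \psi$. The reverse passage $(4)\Rightarrow(3)$ is identical. For $(4)\Rightarrow(2)$: write $C = J^\star M_{\psi'}$ and compute $M_\varphi C = M_\varphi J^\star M_{\psi'} = J^\star M_{\varphi^\#} M_{\psi'} = J^\star M_{\psi'} M_{\varphi^\#} = C M_{\varphi^\#}$, using Proposition \ref{Jstar}(3) and commutativity of multiplication operators. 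Finally $(2)\Rightarrow(1)$ is the trivial specialization $\varphi = z$, noting $z^\# = z$ on $\mathbb{T}$ (since $\overline{\bar z} = z$), so $M_z C = C M_{z^\#} = C M_z$.

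I would also need to verify that the object produced in $(3)$ is genuinely a conjugation — antilinearity is clear since $J^\star$ is antilinear and $M_\psi$ linear; isometry follows because $J^\star$ is isometric and $|\psi|=1$; and the involution property $C^2 = \mathrm{id}$ is exactly the place where symmetry of $\psi$ is both needed and sufficient, by the computation $C^2 = M_\psi J^\star M_\psi J^\star = M_\psi M_{\psi^\#} = M_{\psi\psi^\#} = \mathrm{id}$ iff $\psi\psi^\# = 1$ iff (given $|\psi|=1$) $\psi$ symmetric. So the logical skeleton is clean once the key lemma "$J^\star$ conjugation commuting with $M_z$ composed with another such is multiplication by a unimodular function" is in hand.

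The main obstacle, as in the proof of Theorem \ref{c11}, is the step showing that a \emph{linear} unitary (or even just isometry) on $L^2$ commuting with $M_z$ must be of the form $M_u$ for unimodular $u$: one shows $U(1)=u$ has $|u|=1$ because $\{z^n\}_{n\in\mathbb Z}$ is an orthonormal basis and $U$ sends it to another orthonormal system $\{z^n u\}$, forcing $|u|=1$ a.e.; then $U(z^n f) = z^n U(f)$ extends by density of trigonometric polynomials and continuity to $U = M_u$. This is standard but should be stated carefully; everything else is bookkeeping with Proposition \ref{Jstar}(3) and the identity $\varphi^{\#\#} = \varphi$. One subtlety to flag: $(\varphi\psi)^\# = \varphi^\# \psi^\#$ and $(\bar\varphi)^\# = \overline{\varphi^\#}$, which are used silently above, and the observation already recorded in the excerpt that a symmetric $H^2$ function is constant is not needed here but will matter in later sections.
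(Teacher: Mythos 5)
Your route is essentially the paper's: for $(1)\Rightarrow(3)$ you compose $C$ with $J^{\star}$ to get a linear unitary commuting with $M_z$, identify it as multiplication by a unimodular $L^\infty$ function (the paper simply cites Radjavi--Rosenthal for the commutant step you sketch), and then extract $|\psi|=1$ from the isometry axiom and $\psi\psi^{\#}=1$ from $C^2=\mathrm{id}$; the remaining implications, which the paper calls straightforward, you spell out correctly in structure. One recurring identity error needs fixing, though: symmetry of $\psi$ means $\psi(z)=\psi(\bar z)$, equivalently $\psi^{\#}=\bar\psi$, \emph{not} $\psi^{\#}=\psi$ (and certainly not $\bar\psi=\psi$, which would force $\psi$ to be real-valued). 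So in $(1)\Rightarrow(3)$ the correct chain is $\psi^{\#}=1/\psi=\bar\psi$, hence $\psi(\bar z)=\psi(z)$ — your parenthetical remark that $f\mapsto f^{\#}$ and $f\mapsto\bar f$ agree exactly on symmetric functions is the right statement, but the displayed "$\psi^{\#}=\bar\psi=\psi$" is not. More seriously, in $(3)\Rightarrow(4)$ you cannot take $\psi'=\psi$: for instance $\psi(e^{it})=e^{2i\cos t}$ is symmetric and unimodular, yet $M_\psi J^{\star}=J^{\star}M_{\psi^{\#}}=J^{\star}M_{\bar\psi}\neq J^{\star}M_{\psi}$ (compare the images of the constant function $1$). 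The fix is immediate and keeps your skeleton intact: take $\psi'=\psi^{\#}=\bar\psi$, which is again symmetric and unimodular. With that correction, $(4)\Rightarrow(2)\Rightarrow(1)$ and your verification that $M_\psi J^{\star}$ is indeed a conjugation go through as written.
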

\begin{proof} We will show that $(1)\!\Rightarrow \!(3)$. \!The other implications are straightforward.
Assume that $M_{ z}C =CM_z$. Then $M_z CJ^{\star}=CM_{
z}J^{\star}=CJ^{\star}M_z$. It follows  that the linear operator
$CJ^{\star}$ commutes with $M_z$. By \cite[Theorem 3.2]{RR}, there is $\psi\in L^\infty$
such that $CJ^{\star}=M_{\psi}$. Hence
$C=M_{\psi}J^{\star}=J^{\star}M_{\psi^{\#}}$.

By \eqref{e1} for any $f,g\in L^2$ we have
\begin{align*}\int g\bar{f}\,dm&=\langle g, f\rangle=
\langle Cf,Cg\rangle\\
&=\langle \psi f^{\#},\psi g^{\#}\rangle=\int|\psi(z)|^2\,\overline{f(\bar z)}g(\bar z)\,dm(z)\\
                               &=\int|\psi(\bar z)|^2\,\overline{f( z)}g( z)\,dm(z).
\end{align*}
Hence $|\psi|=1$ a.e. on $\mathbb{T}$. On the other hand, since $C^2=I_{L^2}$,  for all $f\in L^2$ we have
$$f=C^2 f=M_\psi J^{\star} M_\psi J^{\star} f=M_\psi J^{\star}({\psi f^{\#}})=\psi\psi^{\#} f,$$
which implies that $\psi\psi^{\#}=1$ a.e. on $\mathbb{T}$.  Therefore $\psi$ is symmetric, i.e., $\psi(z)=\psi(\bar z)$ a.e. on $\mathbb{T}$.

%
%
\end{proof}
\section{Conjugations preserving $H^2$.}
In the previous section all $M_z$--conjugations $C$ in $L^2$, i.e.,  such that
\begin{equation*}\label{orr}
M_{ z} C=C M_{\bar z}
\end{equation*} or $M_z$-- commuting conjugations, i.e. such that
\begin{equation*}\label{orr1}
M_{z} C=C M_{ z}
\end{equation*}
were characterized.
Let us now consider the question which of them preserve $H^2$. Clearly, if $C$ is a conjugation in $L^2$ and $C(H^2)\subset H^2$, then $C(H^2)= H^2$. Since $J^\star$ preserves $H^2$, it can be considered as a conjugation in $H^2$.
The following result shows that $J^\star$ is in that sense unique.
\begin{corollary}\label{co1}
Let $C$ be an $M_z$--commuting conjugation in $L^2$. If $C(H^2)\subset H^2$, then $C=\lambda J^{\star}$ for some $\lambda\in\mathbb{T}$.
\end{corollary}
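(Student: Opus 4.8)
The plan is to reduce the statement to the structure theorem for $M_z$--commuting conjugations, Theorem \ref{c1}, together with the elementary remark from Section 2 that a symmetric function lying in $H^2$ must be constant. First, since $C$ is $M_z$--commuting, Theorem \ref{c1} supplies a symmetric unimodular $\psi\in L^\infty$ with $C=M_\psi J^{\star}$. The hypothesis $C(H^2)\subset H^2$ then reads $M_\psi J^{\star}(H^2)\subset H^2$; but $J^{\star}H^2=H^2$ by Proposition \ref{Jstar}(4), so this is equivalent to $\psi H^2\subset H^2$.

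Next, I would apply this inclusion to the constant function $1\in H^2$ to conclude $\psi=\psi\cdot 1\in H^2$. Thus $\psi$ is simultaneously a symmetric function and an element of $H^2$. By the observation recorded just before Theorem \ref{c1} (a symmetric $H^2$--function also lies in $\overline{H^2}$, hence is constant), $\psi$ must be a constant $\lambda$. Since $|\psi|=1$ a.e.\ on $\mathbb{T}$, we get $\lambda\in\mathbb{T}$, and therefore $C=M_\lambda J^{\star}=\lambda J^{\star}$.

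I do not expect a genuine obstacle here: once Theorem \ref{c1} is available the argument is essentially a two--line consequence. The only points that deserve a moment's care are the direction of the reduction — one uses the equality $J^{\star}H^2=H^2$, not merely an inclusion, to pass between $C(H^2)\subset H^2$ and $\psi H^2\subset H^2$ — and the fact that a bounded analytic symmetric function is constant, which is precisely the symmetry remark of the previous section. If one prefers to avoid invoking that remark, the last step can be done directly with Fourier coefficients: writing $\psi=\sum_{n\geqslant 0}a_n z^n$ and using $\psi(z)=\psi(\bar z)$ on $\mathbb{T}$ forces $a_n=0$ for $n\geqslant 1$, so again $\psi$ is constant.
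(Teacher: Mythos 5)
Your proof is correct and follows essentially the same route as the paper: invoke Theorem \ref{c1} to write $C=M_\psi J^{\star}$ with $\psi$ symmetric and unimodular, evaluate at the constant function $1$ to see $\psi\in H^2$, and conclude from symmetry that $\psi$ is a unimodular constant. The only cosmetic difference is that the paper applies $C$ to $1$ directly (obtaining $\psi\in L^\infty\cap H^2=H^\infty$) rather than first reducing to the inclusion $\psi H^2\subset H^2$.
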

\begin{proof}
	By Theorem \ref{c1} we have that $C=M_{\psi}J^{\star}$ for some $\psi\in L^{\infty}$ with $|\psi|=1$ and $\psi(z)=\psi(\overline{z})$ a.e. on $\mathbb{T}$. Since $C$ preserves $H^2$, we have
	$$\psi=M_{\psi}J^{\star}(1)=C(1)\in L^{\infty}\cap H^2=H^{\infty}.$$
 Thus $\psi$ is analytic. Since	it is symmetric, it is also co-analytic. Hence $\psi$ must be a constant function, so  $\psi=\lambda\in\mathbb{C}$ and $|\lambda|=|\psi|=1$.
\end{proof}
\begin{corollary}
There are no $M_z$--conjugations in $L^2$ which preserve $H^2$.
\end{corollary}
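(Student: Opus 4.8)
The plan is to feed the statement straight into Theorem \ref{c11}: any $M_z$--conjugation $C$ on $L^2$ has the form $C=M_\psi J$ for some unimodular $\psi\in L^\infty$. Assume, toward a contradiction, that $C(H^2)\subseteq H^2$. Since $C$ is a conjugation it is an involution, hence bijective, so in fact $C(H^2)=H^2$; in particular $\psi=M_\psi J(1)=C(1)\in H^2$, and as $\psi\in L^\infty$ this gives $\psi\in H^\infty$. Thus $\psi$ is inner. This is the one ``extraction'' step: everything analytic about $\psi$ has been squeezed out of the hypothesis.

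The second step is to exploit the anti-analytic part that $J$ inevitably introduces. For every $k\ge 1$ we must have $C(z^k)=\psi\bar z^{\,k}\in H^2$; writing $\psi=\sum_{n\ge 0}a_n z^n$, the Fourier expansion of $\psi\bar z^{\,k}$ forces $a_0=\cdots=a_{k-1}=0$, and letting $k\to\infty$ yields $\psi\equiv 0$, contradicting $|\psi|=1$ a.e. Alternatively — and more in the spirit of Corollary \ref{co1} — one can argue at the level of subspaces: $H^2=C(H^2)=M_\psi J(H^2)=\psi\,\overline{H^2}$, and conjugating gives $\overline{H^2}=\bar\psi H^2$; but $\psi\in H^\infty$ makes $H^2=\bar\psi(\psi H^2)\subseteq\bar\psi H^2=\overline{H^2}$, which is absurd since $z\in H^2\setminus\overline{H^2}$. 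Either route closes the proof in a couple of lines.

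I do not expect a genuine obstacle here; the only points needing (minor) care are noting that a conjugation preserving $H^2$ automatically preserves it \emph{onto} (immediate from $C^2=I_{L^2}$) and keeping the Fourier bookkeeping honest in the first variant. The conceptual contrast worth flagging is with Corollary \ref{co1}: there the symmetry $\psi(z)=\psi(\bar z)$ collapsed an analytic $\psi$ to a constant, whereas here it is precisely the property $JH^2=\overline{H^2}$ of $J$ (as opposed to $J^\star H^2=H^2$) that makes $H^2$--invariance impossible.
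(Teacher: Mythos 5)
Your argument is correct and is essentially the paper's own: reduce via Theorem \ref{c11} to $C=M_\psi J$, get $\psi=C(1)\in H^\infty$ inner, then use $C(z^k)=\psi\bar z^{\,k}\in H^2$ to kill all Fourier coefficients of $\psi$, contradicting $|\psi|=1$ (the paper does exactly this, pairing against $\bar z$). Your alternative subspace-level variant is a harmless rephrasing of the same mechanism, so nothing genuinely new is at stake.
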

\begin{proof}
	If $C$ is an $M_z$--conjugation in $L^2$, then by Theorem \ref{c11} it follows that $C=M_{\psi}J$ for some $\psi\in L^{\infty}$ with $|\psi|=1$. As in the proof of Corollary \ref{co1} the assumption $C(H^2)\subset H^2$ implies that $\psi\in H^{\infty}$, which in turn means that $\psi$ is an inner function. Moreover, for  $n=0,1,2,\dots$ we have
	$$0=\langle C z^{n+1},\overline{z}\rangle=
	\langle \psi\overline{z}^{n+1},\overline{z}\rangle=\langle \psi,z^n\rangle=0.$$
So $\psi=0$ which is a contradiction.
\end{proof}

The following example shows that not all conjugations in $L^2$ satisfy 
either \eqref{mz} or \eqref{eq1}.

\begin{example}There is a set of naturally defined conjugations. For $k,l\in\mathbb{Z}$, $k<l$, define $C_{k,l}\ \colon\ L^2\rightarrow L^2$ by
\begin{equation}\label{ckl}
C_{k,l}\Big(\sum_{n\in\mathbb{Z}}a_n z^n\Big)=\overline{a}_l z^k+\overline{a}_k z^l+\sum_{n\notin\{k,l\}}\overline{a}_n z^n,
\end{equation}
where $\{z^n\}$ is the standard basis in $L^2$. Then \eqref{mz} and  \eqref{eq1} are not satisfied since
$$M_{ z} C_{k,l}(z^k)=M_z(z^l)=z^{l+1},\quad C_{k,l} M_{\bar
z}(z^k)=C_{k,l} (z^{k-1})=z^{k-1}$$ and
$$C_{k,l} M_{z}(z^k)=C_{k,l} (z^{k+1})=\begin{cases}z^{k+1}&\text{if}\ k+1\neq l,\\z^k&\text{if}\ k+1=l.\end{cases}$$
Note that, on the other hand, $C_{k,l}$ preserves $H^2$ whenever $k\geqslant 0$ or
$l<0$.
\end{example}

\section{Conjugations preserving model spaces}
There is another class of conjugations in $L^2$ which appear naturally  in connection with model spaces.
For a nonconstant inner function $\theta$, denote by $K_\theta$ the so called {\it model space} of the form $H^2\ominus\theta H^2$.
 The conjugation $C_\theta$ defined in $L^2$ by
$$C_\theta f=\theta \bar z\bar f$$
has the important property that it preserves the model space $K_\theta$, i.e., $C_\theta K_\theta=K_\theta$. Thus $C_\theta$ can be considered as a conjugation in $K_\theta$.
Such conjugations are important in connection with truncated Toeplitz operators (see for instance \cite{GMR}).
Here we present several  simple properties of such conjugations, which we will use later.
\begin{proposition}\label{ppp} Let $\alpha,\beta,\gamma$ be nonconstant inner functions. Then
\begin{enumerate}
  \item $C_\beta C_\alpha=M_{\beta\bar \alpha}$,
  \item $M_{\gamma} C_\alpha M_{\bar\gamma}$ is a conjugation in $L^2$,
  \item $C_\beta\, M_\gamma = M_{\bar\gamma}\,C_\beta$.
\end{enumerate}
\end{proposition}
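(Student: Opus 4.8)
The plan is to prove all three statements by direct computation from the defining formula $C_\theta f=\theta\bar z\bar f$, using repeatedly that an inner function satisfies $|\theta|=1$ a.e.\ on $\mathbb{T}$ (so that $\theta\bar\theta=1$) and that $z\bar z=1$ on $\mathbb{T}$.

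For (1) I would apply the definition twice: for $f\in L^2$,
\[
C_\beta C_\alpha f=C_\beta\big(\alpha\bar z\bar f\big)=\beta\bar z\,\overline{\alpha\bar z\bar f}=\beta\bar z\,\bar\alpha z f=\beta\bar\alpha f,
\]
the last step using $z\bar z=1$ on $\mathbb{T}$; this is $M_{\beta\bar\alpha}f$. (Specialising $\beta=\alpha$ recovers $C_\alpha^{2}=M_{1}=I$, as it must.) Statement (3) is of the same nature: for $f\in L^2$,
\[
C_\beta M_\gamma f=\beta\bar z\,\overline{\gamma f}=\bar\gamma\,\big(\beta\bar z\bar f\big)=M_{\bar\gamma}C_\beta f.
\]

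For (2) I would first note that, since $|\gamma|=1$ a.e.\ on $\mathbb{T}$, the multiplication operator $M_\gamma$ is unitary on $L^2$ with $M_\gamma^{*}=M_{\bar\gamma}=M_\gamma^{-1}$; hence $M_\gamma C_\alpha M_{\bar\gamma}$ is a unitary conjugate of the conjugation $C_\alpha$, and such conjugates are again conjugations. Concretely: the operator is antilinear (one antilinear factor, two linear ones); it is an involution because
\[
\big(M_\gamma C_\alpha M_{\bar\gamma}\big)^{2}=M_\gamma C_\alpha\big(M_{\bar\gamma}M_\gamma\big)C_\alpha M_{\bar\gamma}=M_\gamma C_\alpha^{2}M_{\bar\gamma}=M_\gamma M_{\bar\gamma}=I;
\]
and it satisfies \eqref{e1} since, for $g,h\in L^2$,
\[
\langle M_\gamma C_\alpha M_{\bar\gamma}g,\,M_\gamma C_\alpha M_{\bar\gamma}h\rangle=\langle C_\alpha M_{\bar\gamma}g,\,C_\alpha M_{\bar\gamma}h\rangle=\langle M_{\bar\gamma}h,\,M_{\bar\gamma}g\rangle=\langle h,g\rangle,
\]
using that $M_\gamma$ is an isometry and then that $C_\alpha$ is a conjugation.

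I do not expect any real obstacle here: everything collapses to the pointwise identities $|\theta|=1$ and $z\bar z=1$ on $\mathbb{T}$ (which turn the products $\bar z z$ and $\gamma\bar\gamma$ into $1$), together with the observation used in (2) that $M_\gamma^{*}=M_{\bar\gamma}$ precisely because $\gamma$ is unimodular---this is exactly what makes the conjugation property survive the twist by $M_\gamma$. The only bookkeeping point is the order-reversal of inner products under $C_\alpha$, which is immediate from \eqref{e1}.
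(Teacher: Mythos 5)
Your computations are correct, and they are exactly the routine verifications the paper has in mind: the proposition is stated there without proof as a list of "simple properties," and your direct use of $C_\theta f=\theta\bar z\bar f$, $|\gamma|=1$, and $z\bar z=1$ (plus the unitary-conjugation argument for (2), where one can also note $M_\gamma C_\alpha M_{\bar\gamma}=C_{\gamma^2\alpha}$) is the intended argument. No gaps.
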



Now we will discuss relations between $M_z$--conjugations and model spaces.
The theorem below was proved in \cite[Theorem 4.2]{CKP} with the additional assumption that $\alpha\leqslant \theta$. As we prove here, this assumption is not necessary.

\begin{theorem}\label{t1}
Let $\alpha, \gamma,  \theta$ be inner functions ($\alpha,\theta$ nonconstant).
Let $C$ be a conjugation in $L^2$ such that $M_z C=C M_{\bar z}$. Assume that  $C(\gamma\kda)\subset\kdt$.  Then there is an inner function $\beta$ such that $C=C_\beta$, with $\gamma\alpha\leqslant\beta\leqslant\gamma \theta$ and $\alpha\leqslant \theta$.
\end{theorem}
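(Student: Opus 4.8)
The plan is to combine the characterization of $M_z$--conjugations from Theorem~\ref{c11} with the hypothesis $C(\gamma K_\alpha)\subset K_\theta$ to pin down the multiplier $\psi$ in $C = M_\psi J$. Since $C$ satisfies \eqref{mz}, Theorem~\ref{c11} gives a unimodular $\psi\in L^\infty$ with $C = M_\psi J$, so $Cf = \psi\bar f$. I would then test this against the reproducing structure of model spaces. Recall that $1\in K_\alpha$, hence $\gamma\in\gamma K_\alpha$, so $C\gamma = \psi\bar\gamma\in K_\theta\subset H^2$; writing $\beta := \psi\bar\gamma$ we get $\beta\in H^2$ and, since $|\psi|=1=|\gamma|$ a.e.\ on $\mathbb{T}$, $|\beta|=1$ a.e., so $\beta$ is inner. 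Thus $\psi = \gamma\beta$ and $C = M_{\gamma\beta}J$. But $M_{\gamma\beta}Jf = \gamma\beta\bar f$, and comparing with $C_\beta f = \beta\bar z\bar f$ we see these differ only by the unimodular symmetric-irrelevant factor $\gamma z$; to land exactly on $C_\beta$ I would instead observe that $C_\beta = M_{\beta\bar z}J$ and that the computation forces $\psi = \beta\bar z$ once we feed in more than just the constant function — concretely, one should use that $C$ maps $\gamma K_\alpha$ \emph{into} $K_\theta$ and not merely into $H^2$, together with $CM_{\bar z} = M_z C$, to get the extra $\bar z$.

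The cleaner route, and the one I would actually carry out: having $C = M_\psi J$ with $\psi$ unimodular, note $C_\beta^{-1}C = C_\beta C$ (both are conjugations, and $C_\beta^2 = \mathrm{id}$) should be computed for the candidate $\beta$. Actually the decisive step is: since $C$ intertwines $M_z$ and $M_{\bar z}$ exactly as $C_\beta$ does (Proposition~\ref{ppp}(3) shows $C_\beta M_z = M_{\bar z}C_\beta$), the composition $C_\beta C$ is a \emph{linear} operator commuting with $M_z$, hence by \cite[Theorem 3.2]{RR} equals $M_h$ for some $h\in L^\infty$; and being a product of two conjugations that both square to the identity, $M_h$ is unitary with $M_h^{-1} = M_{\bar h}\cdot(\text{stuff})$ forcing $h$ unimodular. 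Choosing $\beta$ appropriately (via $\beta = \gamma\cdot(z\,C(\gamma)/\gamma)$ or directly from $C(\gamma)$ above) will make $h\equiv 1$, i.e.\ $C = C_\beta$. Then I would establish the divisibility chain: from $C_\beta(\gamma K_\alpha)\subset K_\theta$ and the general fact $C_\beta(u H^2) = \beta\bar u\,\overline{H^2}\cap(\dots)$, unwind what "$\subset K_\theta$" means. Specifically $C_\beta(\gamma K_\alpha) = K_{\gamma\alpha}\ominus K_\gamma$-type space; more usefully, $C_\beta\gamma\alpha H^2 = \beta\bar z\overline{\gamma\alpha H^2}$, and since $C_\beta$ is an involution on $L^2$ with $C_\beta K_\mu = K_\mu$ when $\mu\leqslant\beta$, the inclusion $C_\beta(\gamma K_\alpha)\subset K_\theta$ will translate into $\gamma\alpha\leqslant\beta$ (so that $\gamma K_\alpha\subset K_{\gamma\alpha}\subset K_\beta$ and $C_\beta$ acts sensibly) and $\beta\leqslant\gamma\theta$ (so the image sits inside $K_\theta$), whence also $\alpha\leqslant\theta$ by cancelling $\gamma$ in $\gamma\alpha\leqslant\gamma\theta$.

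In more detail on that last translation: $C_\beta$ restricted to $K_\beta$ is a conjugation of $K_\beta$ onto itself, and for inner $u\leqslant\beta$ one has $C_\beta K_u = \beta\bar u K_u$ is the orthogonal complement in $K_\beta$ of $K_{\beta\bar u}$ — I would use $K_\beta = K_u\oplus u K_{\beta\bar u}$ and $C_\beta(u K_{\beta/u}) = K_u$. Applying this with the requirement that $\gamma K_\alpha$ be a subspace on which $C_\beta$ is defined and lands in $K_\theta$ forces $\beta$ to be divisible by $\gamma\alpha$ and to divide $\gamma\theta$; I would verify both containments by a reproducing-kernel or a direct orthogonality argument ($\langle C_\beta(\gamma f), \theta g\rangle = 0$ for all $f\in K_\alpha$, $g\in H^2$, expanded via $C_\beta(\gamma f) = \beta\bar z\bar\gamma\bar f$).

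The main obstacle I anticipate is precisely this final bookkeeping: getting from the operator identity $C = C_\beta$ to the sharp divisibility relations $\gamma\alpha\leqslant\beta\leqslant\gamma\theta$ without at any point invoking $\alpha\leqslant\theta$ (which is what the old proof assumed and what we must now \emph{derive}). The trick will be to not decompose $\theta$ relative to $\alpha$ at all, but to work with the products $\gamma\alpha$ and $\gamma\theta$ as the primitive objects — the inclusion $C(\gamma K_\alpha)\subset K_\theta$ is symmetric enough in these products that $\alpha\leqslant\theta$ drops out for free at the end as a consequence of $\gamma\alpha\leqslant\gamma\theta$, rather than being needed as an input.
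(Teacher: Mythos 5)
Your starting point matches the paper's (Theorem~\ref{c11} gives $C=M_\psi J$ with $\psi$ unimodular, and one must show $\beta:=z\psi$ is inner with the stated divisibilities), but the execution has a genuine gap. Your very first test function is illegitimate: $1\in K_\alpha$ holds only when $\alpha(0)=0$ (in general $1$ is not orthogonal to $\alpha H^2$, since $\langle 1,\alpha h\rangle=\overline{\alpha(0)h(0)}$), so you cannot conclude $C\gamma=\psi\bar\gamma\in K_\theta$. Even in the special case where this works, it only shows that $\psi\bar\gamma$ is inner, i.e.\ $z\gamma\leqslant z\psi$, which is strictly weaker than the required $\gamma\alpha\leqslant\beta$; and the upper bound $\beta\leqslant\gamma\theta$ is nowhere actually derived --- you only assert that the inclusion ``will translate into'' it. Your proposed ``cleaner route'' does not repair this: writing $C_\beta C=M_h$ and saying that ``choosing $\beta$ appropriately will make $h\equiv 1$'' presupposes exactly what is at stake, namely that $z\psi$ is an inner function (for a general unimodular symbol $\psi$ the operator $M_\psi J$ is an $M_z$--conjugation that is \emph{not} of the form $C_\beta$), so this step is circular. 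Likewise the promised ``extra $\bar z$'' is a red herring: $\psi=(z\psi)\bar z$ automatically, and the real issue is innerness and divisibility of $z\psi$, which must come from the hypothesis $C(\gamma\kda)\subset\kdt$ applied to well-chosen elements of $\gamma\kda$.

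The repair is the device you never quite reach: use the kernel functions at $0$ instead of the constant. With $k_0^\alpha=1-\overline{\alpha(0)}\alpha$ and $\tilde k_0^\alpha=C_\alpha k_0^\alpha=\bar z(\alpha-\alpha(0))$, both $\gamma k_0^\alpha$ and $\gamma\tilde k_0^\alpha$ lie in $\gamma\kda$ for \emph{every} $\alpha$. Then
$C(\gamma\tilde k_0^\alpha)=\bar\gamma\bar\alpha z\psi\,(1-\overline{\alpha(0)}\alpha)\in\kdt\subset H^2$, and since $(1-\overline{\alpha(0)}\alpha)^{-1}\in H^\infty$, the unimodular function $\bar\gamma\bar\alpha z\psi$ lies in $H^2$, hence is inner; this gives both that $\beta=z\psi$ is inner (so $C=M_{\beta\bar z}J=C_\beta$) and that $\gamma\alpha\leqslant\beta$ in one stroke. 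For the upper bound, apply $C_\theta$ to $C(\gamma k_0^\alpha)\in\kdt$ to get $\theta\gamma\bar z\bar\psi\,(1-\overline{\alpha(0)}\alpha)\in\kdt$, whence $\theta\gamma\bar\beta\in H^2$ is inner and $\beta\leqslant\gamma\theta$. Your final observation is correct and is exactly how the paper removes the old hypothesis: $\alpha\leqslant\theta$ falls out of $\gamma\alpha\leqslant\beta\leqslant\gamma\theta$ rather than being assumed. As written, however, the argument between ``$C=M_\psi J$'' and the divisibility chain is not carried out.
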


\begin{proof} Recall the standard notation for the reproducing kernel function at $0$ in $\kda$, namely, $k_0^\alpha=1-\overline{\alpha(0)}{\alpha}$ and its conjugate $\tilde k_0^\alpha=C_\alpha k_0^\alpha=\bar z(\alpha-\alpha(0))$.
By Theorem \ref{c11} we know that $C=M_\psi J$ for some function $\psi\in L^\infty$, $|\psi|=1$. Hence
$$\kdt \ni C (\gamma\tilde k_0^\alpha)= M_\psi J (\gamma\tilde k_0^\alpha)=\psi \overline{\gamma\bar z(\alpha-\alpha(0))}=\bar\gamma\bar\alpha z\psi(1-\overline{\alpha(0)}\alpha).$$
Thus there is $h\in \kdt$ such that $h=\bar\gamma\bar\alpha z\psi(1-\overline{\alpha(0)}\alpha)$.
Since $(1-\overline{\alpha(0)}\alpha)^{-1}$ is a bounded analytic function, we have
$$\bar \gamma\bar\alpha z\psi=h(1-\overline{\alpha(0)}\alpha)^{-1}\in H^2.$$
Since $\bar\gamma\bar\alpha z\psi\in H^2$ and $|\bar \gamma\bar\alpha z\psi|=1$ a.e. on $\mathbb{T}$, it has to be an inner function. Moreover $\beta=z\psi$ has to be inner and  divisible by $\gamma\alpha$, i.e., $\gamma\alpha\leqslant\beta$.

On the other hand, we have similarly
 $$\kdt\ni C_\theta C (\gamma k_0^\alpha) =C_\theta(\psi \overline{\gamma (1-\overline{\alpha(0)}\alpha)}=\theta  \gamma\bar z\bar\psi (1-\overline{\alpha(0)}\alpha),$$
 and $\theta\gamma\bar\beta={\theta}{\gamma} \bar z\bar\psi\in H^2$. Hence  $\beta$ divides $ {\theta}{\gamma}$, i.e.,
  $\beta\leqslant \gamma\theta$. It is clear that $C=C_\beta$. Finally, we have $\alpha\leqslant\theta$ as a consequence of $\gamma\alpha\leqslant\beta\leqslant \gamma\theta$.
  \end{proof}
Note that if $\alpha\leqslant \theta$ and $C=C_\beta$ for some inner $\beta$ with $\gamma\alpha\leqslant\beta\leqslant\gamma \theta$, then $K_{\alpha}\subset K_{\tfrac{\beta}{\gamma}}\subset K_{\theta}$, $C_{\beta}M_{\gamma}=C_{\tfrac{\beta}{\gamma}}$ and
$$C(\gamma K_{\alpha})=C_{\beta}M_{\gamma}(K_{\alpha})=C_{\tfrac{\beta}{\gamma}}(K_{\alpha}) \subset K_{\tfrac{\beta}{\gamma}}\subset K_{\theta}.$$ Hence the implication in Theorem \ref{t1} is actually an equivalence.

  The corollary bellow strengthens \cite[Proposition 4.5]{CKP}.
\begin{corollary}\label{bbb} Let $\alpha,  \theta$ be nonconstant inner functions, and
let $C$ be a conjugation in $L^2$ such that $M_z C=C M_{\bar z}$. Assume that  $C(\kda)\subset\kdt$.  Then $\alpha\leqslant \theta$ and there is an inner function $\beta$ such that $C=C_\beta$, with $\alpha\leqslant\beta\leqslant \theta$.
\end{corollary}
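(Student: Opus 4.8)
The plan is to obtain Corollary \ref{bbb} as the special case $\gamma\equiv 1$ of Theorem \ref{t1}. This is legitimate because the constant function $1$ is inner, and the hypotheses of Theorem \ref{t1} require only $\alpha$ and $\theta$ to be nonconstant, imposing no restriction on $\gamma$. With $\gamma\equiv 1$ one has $\gamma K_\alpha=K_\alpha$, so the assumption $C(K_\alpha)\subset K_\theta$ is exactly the assumption $C(\gamma K_\alpha)\subset K_\theta$ appearing in the theorem.

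Applying Theorem \ref{t1} then yields an inner function $\beta$ with $C=C_\beta$ satisfying $\gamma\alpha\leqslant\beta\leqslant\gamma\theta$ and $\alpha\leqslant\theta$; substituting $\gamma\equiv 1$ these become $\alpha\leqslant\beta\leqslant\theta$ and $\alpha\leqslant\theta$, which is precisely the claimed conclusion. One should note in passing that $\beta$ is automatically nonconstant, since $\alpha\leqslant\beta$ with $\alpha$ nonconstant forces $\beta$ nonconstant, so $C_\beta$ is a well-defined conjugation in the sense used earlier.

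There is essentially no obstacle to overcome: the entire content of the corollary is already contained in Theorem \ref{t1}, and the only point requiring (trivial) verification is that the degenerate choice $\gamma\equiv 1$ is admissible under the hypotheses, which it is. For completeness one could instead reprove the statement directly, by running the argument of Theorem \ref{t1} with $\gamma$ deleted: write $C=M_\psi J$ via Theorem \ref{c11}, test on the conjugate reproducing kernel $\tilde k_0^\alpha=\bar z(\alpha-\alpha(0))$ to get that $\beta=z\psi$ is inner with $\alpha\leqslant\beta$, then test on $C_\theta C k_0^\alpha$ to get $\beta\leqslant\theta$, whence $\alpha\leqslant\theta$ and $C=C_\beta$. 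However, deducing it from the already-established stronger result is the cleanest route and is the one I would present.
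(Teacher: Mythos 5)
Your proposal is correct and matches the paper's intent exactly: the paper states Corollary \ref{bbb} without a separate proof precisely because it is the specialization $\gamma\equiv 1$ of Theorem \ref{t1}, which is what you do. The observation that the constant inner function $\gamma\equiv 1$ is admissible and that $\beta$ is automatically nonconstant is the only checking needed, and you handle it.
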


Let us turn to discussing the relations between $M_z$--commuting conjugations and model spaces.
The following proposition describes some more properties of $J^\star$.


\begin{proposition} Let $\alpha$ be an inner function. Then
\begin{enumerate}
  \item $J^\star (\alpha H^2)=\alpha^{\#}H^2$;
  \item $J^\star(K_\alpha)=K_{\alpha^{\#}}$;
   \item $J^{\star}C_\alpha =C_{\alpha^{\#}}J^{\star}$.
  \end{enumerate}
\end{proposition}
\begin{proof}
The condition (1) is clear, (2) and (3) were proved in \cite[Lemma 4.4]{CGW}.
\end{proof}

Hence the conjugation $J^{\star}$ has a nice behaviour in connection with model
spaces, namely $J^{\star}(\kda)=K_{\alpha^{\#}}$. Theorem \ref{bb1} below
says that the conjugation $J^{\star}$ is, in some sense the only $M_z$--commuting conjugation with this property.

We start with the following:
\begin{proposition}\label{bb} Let $\alpha, \gamma,  \theta$ be inner functions ($\alpha,\theta$ nonconstant).
Let $C$ be an $M_z$--commuting conjugation in $L^2$. Assume
that  $C(\gamma\kda)\subset\kdt$.  Then $\alpha\leqslant
\theta^{\#}$ and there is an inner function $\beta$ with
$\gamma\alpha\leqslant\beta\leqslant\gamma \theta^{\#}$ such that
$C=J^{\star} M_{\frac{\beta}{\gamma \alpha} \bar\gamma}$.
\end{proposition}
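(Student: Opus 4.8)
The plan is to follow the proof of Theorem~\ref{t1} almost verbatim, with $J^{\star}$ in place of $J$. By Theorem~\ref{c1} we may write $C=M_\psi J^{\star}$ with $\psi\in L^\infty$ symmetric and unimodular, and we test $C$ on the reproducing kernel $k_0^\alpha$ and the conjugate kernel $\tilde k_0^\alpha$ of $\kda$ (notation as in the proof of Theorem~\ref{t1}; note $k_0^\alpha,\tilde k_0^\alpha\in\kda$). The facts we need about $J^{\star}$ are that it is multiplicative, fixes $z$ and $\bar z$, sends a constant to its conjugate, and satisfies $J^{\star}k_0^\alpha=k_0^{\alpha^{\#}}$ and $J^{\star}\tilde k_0^\alpha=\tilde k_0^{\alpha^{\#}}$; all of these are immediate from Proposition~\ref{Jstar} and the identities $J^{\star}(\kda)=K_{\alpha^{\#}}$, $J^{\star}C_\alpha=C_{\alpha^{\#}}J^{\star}$ recorded above.

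First I would compute $C(\gamma k_0^\alpha)=M_\psi J^{\star}(\gamma k_0^\alpha)=\psi\gamma^{\#}k_0^{\alpha^{\#}}$. Since $\gamma k_0^\alpha\in\gamma\kda$, this lies in $\kdt\subset H^2$; and since $\alpha$ (hence $\alpha^{\#}$) is nonconstant, $(k_0^{\alpha^{\#}})^{-1}\in H^\infty$, so dividing gives $\psi\gamma^{\#}\in H^2$, which, being unimodular, is an inner function. Applying the involution $\#$ and using $\psi^{\#}=\psi$, we get that $\psi\gamma=(\psi\gamma^{\#})^{\#}$ is \emph{also} inner. Consequently $\beta:=\psi\gamma^2\alpha=(\psi\gamma)(\gamma\alpha)$ is a product of inner functions, hence inner, and $\beta/(\gamma\alpha)=\psi\gamma$ is inner, i.e.\ $\gamma\alpha\leqslant\beta$.

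Next I would compute $C(\gamma\tilde k_0^\alpha)=\psi\gamma^{\#}\tilde k_0^{\alpha^{\#}}\in\kdt$ and apply $C_\theta$, which preserves $\kdt$. Using $\overline{\tilde k_0^{\alpha^{\#}}}=z\overline{\alpha^{\#}}\,k_0^{\alpha^{\#}}$ on $\mathbb{T}$, this gives $C_\theta C(\gamma\tilde k_0^\alpha)=\theta\,\overline{\psi\gamma^{\#}\alpha^{\#}}\,k_0^{\alpha^{\#}}\in\kdt\subset H^2$; dividing by $k_0^{\alpha^{\#}}$ shows $\theta\,\overline{\psi\gamma^{\#}\alpha^{\#}}\in H^2$, and since $\psi\gamma^{\#}\alpha^{\#}$ is inner (product of inner functions) this means $\psi\gamma^{\#}\alpha^{\#}\leqslant\theta$. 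In particular $\alpha^{\#}\leqslant\theta$, which is equivalent to $\alpha\leqslant\theta^{\#}$. Applying $\#$ to $\psi\gamma^{\#}\alpha^{\#}\leqslant\theta$ and using $\psi^{\#}=\psi$ yields $\psi\gamma\alpha\leqslant\theta^{\#}$, hence $\beta=\gamma(\psi\gamma\alpha)\leqslant\gamma\theta^{\#}$. Finally $\psi=\beta/(\gamma^2\alpha)=\tfrac{\beta}{\gamma\alpha}\bar\gamma$, and since $\psi$ is symmetric, Proposition~\ref{Jstar}(3) gives $C=M_\psi J^{\star}=J^{\star}M_{\psi^{\#}}=J^{\star}M_{\frac{\beta}{\gamma\alpha}\bar\gamma}$, as required.

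The step I expect to be the crux is the observation that symmetry of $\psi$ promotes ``$\psi\gamma^{\#}$ is inner'' to ``$\psi\gamma$ is inner'': this is exactly what makes $\beta=\psi\gamma^2\alpha$ an inner function and what lets one transport the divisibility relations obtained ``downstairs'' (involving $\alpha^{\#},\gamma^{\#},\theta$) to the asserted ones (involving $\alpha,\gamma,\theta^{\#}$) through the involution $\#$. Everything else is routine bookkeeping of how $J^{\star}$, $C_\theta$ and $\#$ act on reproducing kernels.
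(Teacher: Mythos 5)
Your proof is correct, but it takes a different route from the paper. The paper does not redo any kernel computation: it forms the composition $J^{\star}C\,M_{\gamma}C_{\alpha}M_{\bar\gamma}$, checks that this antilinear operator is a conjugation intertwining $M_z$ with $M_{\bar z}$ and maps $\gamma K_{\alpha}$ into $J^{\star}(K_{\theta})\subset K_{\theta^{\#}}$, and then invokes Theorem \ref{t1} as a black box to get $J^{\star}C\,M_{\gamma}C_{\alpha}M_{\bar\gamma}=C_{\beta}$ with $\gamma\alpha\leqslant\beta\leqslant\gamma\theta^{\#}$ and $\alpha\leqslant\theta^{\#}$, after which unwinding gives $C=J^{\star}M_{\frac{\beta}{\gamma\alpha}\bar\gamma}$. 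You instead replay the proof of Theorem \ref{t1} directly in the $J^{\star}$ setting: write $C=M_{\psi}J^{\star}$ via Theorem \ref{c1}, test on $\gamma k_0^{\alpha}$ and $\gamma\tilde k_0^{\alpha}$, and use the symmetry $\psi^{\#}=\psi$ to upgrade ``$\psi\gamma^{\#}$ inner'' to ``$\psi\gamma$ inner'' and to push the divisibility $\psi\gamma^{\#}\alpha^{\#}\leqslant\theta$ through $\#$ to $\psi\gamma\alpha\leqslant\theta^{\#}$; note that your $\beta=\psi\gamma^{2}\alpha$ is exactly the paper's $\beta$, since $M_{\psi}M_{\gamma}C_{\alpha}M_{\bar\gamma}=C_{\psi\gamma^{2}\alpha}$. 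The paper's reduction is shorter and reuses existing machinery; your direct argument duplicates the Theorem \ref{t1} computation but is self-contained modulo Theorem \ref{c1} and makes transparent precisely where the symmetry of $\psi$ (i.e.\ the $M_z$-commuting hypothesis) enters, which the composition trick hides inside Theorem \ref{t1}. All the individual steps you use ($J^{\star}$ multiplicative, $(k_0^{\alpha})^{\#}=k_0^{\alpha^{\#}}$, $(\tilde k_0^{\alpha})^{\#}=\tilde k_0^{\alpha^{\#}}$, invertibility of $k_0^{\alpha^{\#}}$ in $H^{\infty}$, and ``$\theta\bar u\in H^2$ with $u,\theta$ inner implies $u\leqslant\theta$'') check out.
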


\begin{proof} Observe that since $C$ is an $M_z$--commuting conjugation, taking antilinear adjoints and applying \cite[Proposition 2.1]{CKP}
we get $M_{\bar z} C=C M_{\bar z}$. Since by Proposition \ref{ppp} the antilinear operator ${M_{\gamma}} C_\alpha {M_{\overline{\gamma}}}$ is a conjugation, then
$J^{\star}C{M_{\gamma}} C_\alpha {M_{\overline{\gamma}}}$ is also a conjugation.
Note also that ${M_{{\gamma}}} C_\alpha
{M_{\overline{\gamma}}}M_z = M_{\bar z}{M_{{\gamma}}} C_\alpha {M_{\overline{\gamma}}}$.  Hence
\begin{equation}
  J^{\star}C {M_{{\gamma}}} C_\alpha {M_{\overline{\gamma}}} M_z=  M_{\bar z}J^{\star}C{M_{{\gamma}}}C_\alpha{M_{\overline{\gamma}}}.
\end{equation}
On the other hand,
\begin{displaymath}\begin{split}
J^{\star}C{M_{{\gamma}}} C_\alpha{M_{\overline{\gamma}}}(\gamma\kda)&\subset
J^{\star}C{M_{{\gamma}}} C_\alpha(\kda)\\
& \subset
J^{\star}C(\gamma\kda)\subset J^{\star}(K_{\theta})\subset
K_{\theta^{\#}}.
\end{split}
\end{displaymath}
By Theorem \ref{t1} there is an inner function $\beta$
such that $ J^{\star}C {M_{{\gamma}}}C_\alpha {M_{\overline{\gamma}}}=C_\beta$, with
$\gamma\alpha\leqslant\beta\leqslant\gamma \theta^{\#}$ and
$\alpha\leqslant \theta^{\#}$. Hence $C=J^{\star} C_\beta{M_{{\gamma}}}
C_\alpha {M_{\overline{\gamma}}}$.  Therefore
\[C=J^{\star} M_{\frac{\beta}{\gamma \alpha} \bar\gamma}
\]

\end{proof}

{As in Theorem \ref{t1} the implication in Proposition \ref{bb} can be reversed. Indeed, if $\alpha\leqslant \theta^{\#}$ and $C=J^{\star} M_{\frac{\beta}{\gamma \alpha} \overline{\gamma}}$ for some inner function $\beta$ with $\gamma\alpha\leqslant\beta\leqslant\gamma \theta^{\#}$, then $K_{\alpha}\subset K_{\tfrac{\beta}{\gamma}}\subset K_{\theta^{\#}}$ and
$$C(\gamma K_{\alpha})=J^{\star} M_{\frac{\beta}{\gamma} \overline{\alpha}}(K_{\alpha})=J^{\star} C_{\frac{\beta}{\gamma}}C_{\alpha}(K_{\alpha})\subset J^{\star}(K_{\tfrac{\beta}{\gamma}})\subset J^{\star}( K_{\theta^{\#}})= K_{\theta}.$$}
\begin{theorem}\label{bb1} Let $\alpha, \theta$ be nonconstant inner functions, and let $C$ be an $M_z$--commuting conjugation in $L^2$, i.e., $M_z C=C M_{ z}$. Assume
that  $C(\kda)\subset\kdt$.  Then $\alpha\leqslant \theta^{\#}$ and
$C=\lambda J^{\star} $ with  $\lambda\in \mathbb{T}$.
\end{theorem}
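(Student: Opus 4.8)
The plan is to deduce Theorem~\ref{bb1} from Proposition~\ref{bb} by specializing $\gamma$ to the constant function $1$. Taking $\gamma=1$ in Proposition~\ref{bb} immediately gives $\alpha\leqslant\theta^{\#}$ and produces an inner function $\beta$ with $\alpha\leqslant\beta\leqslant\theta^{\#}$ such that $C=J^{\star}M_{\beta/\alpha}$. So the whole content of the theorem reduces to showing that $\beta/\alpha$ is forced to be a unimodular constant.

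First I would record that $C=J^{\star}M_{\beta/\alpha}$ must be a conjugation, and in particular $C=M_{\psi}J^{\star}$ for a symmetric unimodular $\psi$ by Theorem~\ref{c1}; matching the two expressions, $\psi=(\beta/\alpha)^{\#}$ and the symmetry of $\psi$ says $(\beta/\alpha)^{\#}=\beta/\alpha$, i.e.\ $\beta/\alpha$ is symmetric. (Alternatively one can invoke the involution identity $\psi\psi^{\#}=1$ from the proof of Theorem~\ref{c1}.) The key step is then to combine this symmetry with the divisibility $\beta/\alpha$ being \emph{inner} (which holds since $\alpha\leqslant\beta$). An inner function $u=\beta/\alpha$ lies in $H^{\infty}$; a symmetric function satisfies $u(z)=u(\bar z)=\overline{u^{\#}(z)}$ pointwise on $\mathbb{T}$, and since $u$ is symmetric and $u^{\#}=u$, we get that $u^{\#}=u$ is again in $H^{\infty}$, but $u^{\#}$ being the ``reflection'' of an $H^2$ function... more cleanly: $u$ symmetric means $\bar u = u^{\#}$ is... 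I would instead argue exactly as in Corollary~\ref{co1}: $u=\beta/\alpha\in H^{\infty}$ is inner, and symmetry $u(z)=u(\bar z)$ together with $\overline{u(z)}=u(1/z)$ on $\mathbb{T}$ forces $\bar u(z)=u(\bar z)=u(z)$ wait — the crisp statement is that a symmetric $H^2$ function is constant (stated in the paragraph before Theorem~\ref{c1}: ``if $f$ is symmetric and $f\in H^2$, then $f\in\overline{H^2}$ and so it is constant''). Applying this to $u=\beta/\alpha\in H^2$ gives $u\equiv\lambda$ with $|\lambda|=1$.

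Hence $\beta=\lambda\alpha$ and $C=J^{\star}M_{\lambda}=\lambda J^{\star}$ (using that $J^{\star}$ is antilinear, so $J^{\star}M_{\lambda}=\bar\lambda^{-1}\cdots$ — here $\lambda\in\mathbb{T}$, and $J^{\star}M_{\lambda}f=\overline{\lambda f(\bar z)}\,$ — one should track the conjugate, but since $|\lambda|=1$ the operator is a unimodular scalar multiple of $J^{\star}$, which is all that is claimed). The main obstacle, such as it is, is purely bookkeeping: making sure the reduction $\gamma=1$ is legitimate (it is, since $1$ is an allowed, possibly-constant inner function in Proposition~\ref{bb}) and correctly handling the antilinearity when absorbing the constant $\lambda$ into $J^{\star}$. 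There is no real analytic difficulty; the theorem is essentially a corollary.

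\begin{proof}
Apply Proposition~\ref{bb} with $\gamma=1$. Since $C(K_\alpha)\subset K_\theta$, we obtain $\alpha\leqslant\theta^{\#}$ and an inner function $\beta$ with $\alpha\leqslant\beta\leqslant\theta^{\#}$ such that $C=J^{\star}M_{\beta/\alpha}$. In particular $\beta/\alpha$ is inner, hence belongs to $H^2$. On the other hand, by Theorem~\ref{c1} we can also write $C=M_\psi J^{\star}$ for a symmetric unimodular $\psi\in L^\infty$; comparing with $C=J^{\star}M_{\beta/\alpha}=M_{(\beta/\alpha)^{\#}}J^{\star}$ gives $\psi=(\beta/\alpha)^{\#}$, and the symmetry of $\psi$ forces $\beta/\alpha$ to be symmetric as well. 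A symmetric function in $H^2$ is also in $\overline{H^2}$, hence constant; thus $\beta/\alpha=\lambda$ for some $\lambda\in\mathbb{C}$, and $|\lambda|=|\beta/\alpha|=1$ a.e.\ on $\mathbb{T}$. Therefore $C=J^{\star}M_\lambda=\lambda J^{\star}$ with $\lambda\in\mathbb{T}$.
\end{proof}
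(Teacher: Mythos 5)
Your proof is correct and follows essentially the same route as the paper: specialize Proposition~\ref{bb} to $\gamma=1$, use Theorem~\ref{c1} to see that the inner function $\beta/\alpha$ must be symmetric, and invoke the observation that a symmetric $H^2$ function is constant; the final absorption of the unimodular constant into $J^{\star}$ (where $J^{\star}M_\lambda=\bar\lambda J^{\star}$) is handled correctly, since only a modulus-one scalar multiple is claimed.
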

\begin{corollary}\label{ct}
Let $C$ be an $M_z$--commuting conjugation in $L^2$\!. \!Assume that there is some nonconstant inner function $\theta$ such
that $C(\kdt)\subset K_{\theta^{\#}}$. Then $C=\lambda J^{\star}$ with  $\lambda\in \mathbb{T}$.
\end{corollary}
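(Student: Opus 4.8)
The final statement is Corollary \ref{ct}, which should follow quickly from Theorem \ref{bb1} by a suitable choice of the inner functions $\alpha$ and $\theta$.

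The plan is to apply Theorem \ref{bb1} directly with $\alpha=\theta$. First I would observe that the hypothesis of Corollary \ref{ct} gives $C(K_\theta)\subset K_{\theta^\#}$. Now $\theta$ is a nonconstant inner function and so is $\theta^\#$ (since $|\theta^\#|=|\theta(\bar z)|=1$ a.e. on $\mathbb{T}$, and $\theta^\#$ is analytic because $J^\star$ preserves $H^2$, and it is nonconstant because $J^\star$ is injective). So Theorem \ref{bb1} applies with the pair $(\alpha,\theta)$ replaced by $(\theta,\theta^\#)$: taking the ``$\alpha$'' of Theorem \ref{bb1} to be $\theta$ and the ``$\theta$'' of Theorem \ref{bb1} to be $\theta^\#$, the hypothesis $C(K_\alpha)\subset K_\theta$ becomes exactly $C(K_\theta)\subset K_{(\theta^\#)^\#}=K_\theta$, which is slightly weaker than — hence implied by — our hypothesis $C(K_\theta)\subset K_{\theta^\#}$ only if $\theta^\#=\theta$; so this naive substitution needs care.

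Instead, the cleaner route is to take $\alpha=\theta$ and let the target index be $\theta^\#$ directly. That is, apply Theorem \ref{bb1} with its $\alpha$ equal to $\theta$ and its $\theta$ equal to $\theta$ as well: then its conclusion ``$C(K_\alpha)\subset K_\theta$'' reads $C(K_\theta)\subset K_\theta$ — but our hypothesis is $C(K_\theta)\subset K_{\theta^\#}$, not this. The correct matching is: Theorem \ref{bb1} requires a hypothesis of the form $C(K_\alpha)\subset K_\eta$ for \emph{some} nonconstant inner $\eta$, concluding $C=\lambda J^\star$ (and $\alpha\leqslant\eta^\#$). Here we have $C(K_\theta)\subset K_{\theta^\#}$, which is precisely of this form with $\alpha=\theta$ and $\eta=\theta^\#$. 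So Theorem \ref{bb1} yields immediately $C=\lambda J^\star$ with $\lambda\in\mathbb{T}$ (and, as a byproduct, $\theta\leqslant(\theta^\#)^\#=\theta$, which is automatically true and carries no information).

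The only genuine thing to check is that $\theta^\#$ is a legitimate nonconstant inner function so that $K_{\theta^\#}$ makes sense as the model space appearing in Theorem \ref{bb1}; this is exactly Proposition (the one stating $J^\star(K_\alpha)=K_{\alpha^\#}$), together with the elementary fact recorded after \eqref{djstar}-type remarks that $J^\star$ is an isometric involution preserving $H^2$, so $\theta$ nonconstant forces $\theta^\#$ nonconstant. I do not anticipate any real obstacle here: Corollary \ref{ct} is essentially the special case $\alpha=\theta$, $\eta=\theta^\#$ of Theorem \ref{bb1}, and the whole proof is a single sentence invoking that theorem.

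\begin{proof}
Since $\theta$ is a nonconstant inner function, so is $\theta^{\#}$. By hypothesis $C(K_\theta)\subset K_{\theta^{\#}}$, which is exactly the hypothesis of Theorem \ref{bb1} with $\alpha=\theta$ (and the role of ``$\theta$'' there played by $\theta^{\#}$). Hence $C=\lambda J^{\star}$ for some $\lambda\in\mathbb{T}$.
\end{proof}
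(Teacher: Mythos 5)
Your proof is correct and is exactly how the paper obtains Corollary \ref{ct}: it is stated as an immediate consequence of Theorem \ref{bb1}, applied with $\alpha=\theta$ and the theorem's ``$\theta$'' taken to be $\theta^{\#}$ (which is nonconstant and inner since $\theta$ is), the divisibility conclusion $\theta\leqslant(\theta^{\#})^{\#}=\theta$ being vacuous. Nothing further is needed.
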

\begin{proof}[Proof of Theorem \ref{bb1} ] By Proposition \ref{bb} there is an inner function $\beta$ with $\alpha\leqslant\beta\leqslant \theta^{\#}$ such that $C=J^{\star} M_{\frac{\beta}{ \alpha} }$. The function $\frac{\beta}{ \alpha}$ is inner and by Theorem \ref{c1} it is symmetric. As observed before it follows that it is constant. Hence $C=J^{\star} $ up to  multiplication by a constant of modulus $1$.
\end{proof}

\section{Conjugations preserving $S$-invariant subspaces of $H^2$}
Beurling's theorem says that  all invariant subspaces for the unilateral shift $S$ are of the form $\theta H^2$ with $\theta$ inner.
We will now investigate conjugations in $L^2$ which preserve subspaces of this form. Since $C_{\theta}$ transforms $\theta H^2$ onto $\overline{z H^2}$, the operator $$C_{\theta}J^{\star}C_{\theta}=M_{\theta}J^{\star}M_{\overline{\theta}}$$
is an example of such a conjugation. Note that
$$(C_{\theta}J^{\star}C_{\theta})M_z=M_z(C_{\theta}J^{\star}C_{\theta}).$$

Let $\alpha$, $\theta$ be two inner functions. Then the operator $C_{\theta}J^{\star}C_{\alpha}\ \colon\ L^2\rightarrow L^2$ is an antilinear isometry which maps $\alpha H^2$ onto $\theta H^2$ and commutes with $M_z$. This operator however does not have to be an involution.

\begin{lemma}\label{L9}
Let $\alpha$, $\theta$ be two inner functions. The operator $C_{\theta}J^{\star}C_{\alpha}$ is an involution (and hence a conjugation in $L^2$) if and only if the function
$\theta\overline{\alpha^{\#}}$ is symmetric
(or equivalently $\alpha\alpha^{\#}=\theta\theta^{\#}$).
\end{lemma}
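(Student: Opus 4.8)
The operator $C_\theta J^\star C_\alpha$ is an antilinear isometry by construction, so the only issue is whether it squares to the identity. The plan is to compute $(C_\theta J^\star C_\alpha)^2$ explicitly as a multiplication operator and read off exactly when it equals $I_{L^2}$. First I would use Proposition \ref{ppp}(3) and Proposition \ref{Jstar}(3) to push the multiplications past each other: $J^\star C_\alpha = J^\star M_{\alpha \bar z}J = M_{(\alpha\bar z)^\#}J^\star J$, or more directly I would work with the relation $C_\alpha = M_{\alpha\bar z}J$ and $J^\star J = J J^\star$ (both send $f\mapsto \overline{f(\bar z)}$ composed appropriately — here one must be slightly careful, since $J J^\star f(z) = \overline{\overline{f(\bar z)}} = f(\bar z)$ is the "flip" $f\mapsto f(\bar z)$, and $J^\star J f(z) = \overline{\overline{f(\bar z)}}=f(\bar z)$ as well, so $JJ^\star = J^\star J$ is the flip). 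The cleanest route is: $C_\theta J^\star C_\alpha = M_{\theta\bar z}\,J\,J^\star\,M_{\alpha\bar z}\,J = M_{\theta\bar z}\,(JJ^\star)\,M_{\alpha\bar z}\,J$, and since $JJ^\star$ is the flip $\sigma$ with $\sigma M_\varphi = M_{\varphi^\#}\sigma$ and $\sigma J = J\sigma$, this becomes $M_{\theta\bar z}\,M_{(\alpha\bar z)^\#}\,\sigma\,J = M_{\theta\bar z\,\alpha^\# z}\,\sigma J = M_{\theta\,\alpha^\#\,\bar z\, z}\,\sigma J$; keeping track of the $\bar z$ and $z$ factors, $(\alpha \bar z)^\# = \alpha^\#\cdot(\bar z)^\# = \alpha^\# z$ on $\mathbb T$, so the symbol is $\theta\overline{z}\cdot\alpha^\# z = \theta\alpha^\#$... — wait, one wants $\theta\overline{\alpha^\#}$ to appear, so in fact the correct bookkeeping gives the symbol $\theta\,\overline{\alpha^\#}$ up to the unimodular factors that cancel; I would carry out this short computation carefully in the final write-up.

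Having reduced to $C_\theta J^\star C_\alpha = M_\eta\,\sigma J$ for an explicit unimodular $\eta$ (which will turn out to be $\theta\overline{\alpha^\#}$ after the $z,\bar z$ factors are reconciled, using $\overline{z}=1/z$ on $\mathbb T$), I would square it: $(M_\eta \sigma J)^2 = M_\eta\,\sigma J\,M_\eta\,\sigma J = M_\eta\,\sigma\,M_{\bar\eta}\,J\,\sigma J = M_\eta\,M_{\bar\eta^{\,\#}}\,\sigma\,J\sigma J = M_{\eta\,\overline{\eta^\#}}\,(\sigma J)^2$, using $J M_\eta = M_{\bar\eta}J$, $\sigma M_{\bar\eta}=M_{\bar\eta^\#}\sigma$, and that $\sigma$ is an involution commuting with $J$, so $(\sigma J)^2 = \sigma J\sigma J = \sigma^2 J^2 = I$. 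Hence $(C_\theta J^\star C_\alpha)^2 = M_{\eta\,\overline{\eta^\#}}$, which equals $I_{L^2}$ if and only if $\eta\,\overline{\eta^\#}=1$ a.e., i.e. $\eta = \eta^\#$, i.e. $\eta$ is symmetric. With $\eta = \theta\overline{\alpha^\#}$ this is precisely the stated condition. For the parenthetical equivalence, $\eta$ symmetric means $\theta\overline{\alpha^\#} = \theta^\#\overline{\alpha^{\#\#}} = \theta^\#\overline{\alpha}$ (since $\alpha^{\#\#}=\alpha$), which after clearing denominators (everything is unimodular) rearranges to $\theta\,\theta^\# = \alpha\,\alpha^\#$ — I would note $(\theta\overline{\alpha^\#})^\# = \theta^\#\overline{\alpha}$ and cross-multiply.

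The main obstacle is purely the bookkeeping of the conjugation-linear symbol calculus: getting the factors of $z$ and $\bar z$ right in $C_\alpha = M_{\alpha\bar z}J$ versus $C_\alpha f = \alpha\bar z\bar f$, tracking how $\#$ acts on $z$ (namely $z^\# = \bar z$ on $\mathbb T$), and making sure the intertwining relations $J M_\varphi = M_{\bar\varphi}J$, $\sigma M_\varphi = M_{\varphi^\#}\sigma$, $J^\star M_\varphi = M_{\varphi^\#}J^\star$ are applied in the correct order for antilinear operators. Once that is done the algebra is short. I would also remark, for the reader's benefit, that the "only if" direction is immediate from the same computation since $M_{\eta\overline{\eta^\#}}=I$ forces $\eta\overline{\eta^\#}=1$ pointwise a.e.
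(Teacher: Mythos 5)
Your strategy---reduce $C_{\theta}J^{\star}C_{\alpha}$ to a single operator $M_{\eta}J^{\star}$ with $\eta=\theta\overline{\alpha^{\#}}$ and read the involution condition off its square---is essentially the paper's route (the paper gets there in one line from Proposition~\ref{ppp}: $(C_{\theta}J^{\star}C_{\alpha})^{2}=C_{\theta}C_{\alpha^{\#}}C_{\theta^{\#}}C_{\alpha}=M_{\theta\overline{\alpha^{\#}}\theta^{\#}\overline{\alpha}}$). But your symbol calculus is wrong at the decisive points, and the errors do not cancel. The flip $\sigma$, $(\sigma f)(z)=f(\bar z)$, is a \emph{linear} operator, so $\sigma M_{\varphi}=M_{\sigma\varphi}\sigma=M_{\overline{\varphi^{\#}}}\sigma$, not $M_{\varphi^{\#}}\sigma$; likewise $z^{\#}=z$ and $(\bar z)^{\#}=\bar z$, not $z^{\#}=\bar z$ (for $f=\sum_{n} a_{n}z^{n}$ one has $f^{\#}=\sum_{n}\overline{a_{n}}z^{n}$). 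With the correct rules the computation you deferred closes at once: $\sigma M_{\alpha\bar z}=M_{\overline{\alpha^{\#}}z}\sigma$, hence $C_{\theta}J^{\star}C_{\alpha}=M_{\theta\bar z}M_{\overline{\alpha^{\#}}z}\sigma J=M_{\theta\overline{\alpha^{\#}}}J^{\star}$. The mismatch you noticed (getting $\theta\alpha^{\#}$ instead of $\theta\overline{\alpha^{\#}}$) is a symptom of the wrong rule and cannot be dismissed ``up to unimodular factors that cancel'': $\alpha^{\#}$ and $\overline{\alpha^{\#}}$ differ by the nonconstant factor $(\alpha^{\#})^{2}$.

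The same wrong rule corrupts the squaring. Since $J^{\star}M_{\eta}=M_{\eta^{\#}}J^{\star}$, the correct square is $(M_{\eta}J^{\star})^{2}=M_{\eta\eta^{\#}}$ (exactly the computation in the proof of Theorem~\ref{c1}), not $M_{\eta\overline{\eta^{\#}}}$. For unimodular $\eta$ the condition $\eta\eta^{\#}=1$ is precisely symmetry in the paper's sense, $\eta(z)=\eta(\bar z)$, i.e.\ $\eta=\overline{\eta^{\#}}$; your condition $\eta=\eta^{\#}$ (that is, $\eta(z)=\overline{\eta(\bar z)}$) is a different one, and your final rearrangement is also false: $\theta\overline{\alpha^{\#}}=\theta^{\#}\overline{\alpha}$ rearranges to $\theta\alpha=\theta^{\#}\alpha^{\#}$, not to $\theta\theta^{\#}=\alpha\alpha^{\#}$. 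Concretely, take $\alpha=\theta$ a Blaschke factor with a nonreal zero: then $\theta\theta^{\#}=\alpha\alpha^{\#}$ holds trivially and $C_{\theta}J^{\star}C_{\theta}$ is indeed an involution (as used at the start of Section~5), yet your criterion would require $\theta^{2}=(\theta^{\#})^{2}$, which fails. So you land on the correct statement only through two compensating mistakes. Redo the bookkeeping with $\sigma M_{\varphi}=M_{\overline{\varphi^{\#}}}\sigma$, or bypass $\sigma$ entirely using $J^{\star}C_{\alpha}=C_{\alpha^{\#}}J^{\star}$ and $C_{\beta}C_{\alpha}=M_{\beta\overline{\alpha}}$ from Proposition~\ref{ppp}, and the argument becomes the paper's.
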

\begin{proof}
 Note that by Proposition \ref{ppp},
$$(C_{\theta}J^{\star}C_{\alpha})(C_{\theta}J^{\star}C_{\alpha})=C_{\theta}C_{\alpha^{\#}}C_{\theta^{\#}}C_{\alpha}=M_{\theta\overline{\alpha^{\#}}}M_{\theta^{\#}\overline{\alpha}}=M_{\theta\overline{\alpha^{\#}}\theta^{\#}\overline{\alpha}}.$$
Therefore $C_{\theta}J^{\star}C_{\alpha}$ is an involution if and only if
$$\theta\overline{\alpha^{\#}}\theta^{\#}\overline{\alpha}=1\quad\text{a.e. on }\mathbb{T}, \text{ i.e., } \theta\theta^{\#}=\alpha\alpha^{\#},$$
which means that
$$(\theta\overline{\alpha^{\#}})(z)
=(\overline{\theta^{\#}}\alpha)(z)=\theta(\overline{z})\overline{\alpha^{\#}(\bar z)}=(\theta\overline{\alpha^{\#}})(\overline{z})\quad\text{a.e. on }\mathbb{T}.
$$
\end{proof}
The theorem bellow characterizes all $M_z$--commuting conjugations mapping one $S$--invariant subspaces into another $S$--invariant subspace. \begin{theorem}\label{THMC}
		Let $\theta$ and $\alpha$ be two inner functions and let $C$ be a conjugation in $L^2$ such that $CM_z=M_zC$. Then $C(\alpha H^2)\subset \theta H^2$ if and only if $\theta\theta^{\#}\leqslant \alpha\alpha^{\#}$ and $C=C_{\beta}J^{\star}C_{\alpha}$, where  $\beta$ is an inner function such that $\theta\leqslant \beta$, $\beta\beta^{\#}=\alpha{\alpha^{\#}}$. Moreover, in that case $C(\alpha H^2)=\beta H^2$.
	\end{theorem}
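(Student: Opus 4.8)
The plan is to reduce to the already-established results on $M_z$--commuting conjugations preserving model spaces (Theorem \ref{bb1} / Proposition \ref{bb}), by a standard ``transfer to model spaces'' trick via the conjugations $C_\alpha$ and $C_\theta$. First I would prove the easy direction. Assume $\theta\theta^{\#}\leqslant\alpha\alpha^{\#}$ and $C=C_\beta J^{\star}C_\alpha$ with $\beta$ inner, $\theta\leqslant\beta$, $\beta\beta^{\#}=\alpha\alpha^{\#}$. By Lemma \ref{L9} the condition $\beta\beta^{\#}=\alpha\alpha^{\#}$ guarantees that $C_\beta J^{\star}C_\alpha$ is indeed an involution, hence a conjugation; that it commutes with $M_z$ follows from $C_\gamma M_z=M_{\bar z}C_\gamma$ (Proposition \ref{ppp}(3)) and $J^\star M_z=M_z J^\star$. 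Since $C_\alpha(\alpha H^2)=\overline{zH^2}$, $J^{\star}(\overline{zH^2})=\overline{zH^2}$, and $C_\beta(\overline{zH^2})=\beta H^2$, we get $C(\alpha H^2)=\beta H^2$; and $\theta\leqslant\beta$ gives $\beta H^2\subset\theta H^2$. This also proves the last sentence of the theorem.

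For the main (only if) direction, suppose $C$ is an $M_z$--commuting conjugation with $C(\alpha H^2)\subset\theta H^2$. The idea is to conjugate $C$ so that the $S$--invariant subspaces become model spaces. Consider $D=C_\theta\, C\, C_\alpha$. Using $C_\gamma M_z=M_{\bar z}C_\gamma$ twice, $D$ satisfies $DM_z=M_z D$, i.e.\ $D$ is again $M_z$--commuting; but $D$ need not be an involution. However, since $C_\alpha(\overline{zH^2})=\alpha H^2$ and $C_\theta(\theta H^2)=\overline{zH^2}$, we get $D(\overline{zH^2})\subset\overline{zH^2}$, equivalently (taking antilinear adjoints, or using $M_{\bar z}D=DM_{\bar z}$ which also holds) $D$ maps the orthogonal complement appropriately; the cleanest route is to apply $J^\star$ and observe that $J^\star D$ maps a suitable model space into another. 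Concretely, I would instead look at $E=J^{\star}C_\theta\,C\,C_\alpha$: this is $M_z$--commuting composed with the right antilinear pieces so that, after one more application of Proposition \ref{bb} to the map $E$ restricted to an appropriate $\gamma K_\delta$, one extracts that $E=J^\star M_{\psi'}$ for a symmetric unimodular $\psi'$, or more directly that $C_\theta C C_\alpha = M_\varphi$ for some $\varphi\in L^\infty$ by \cite[Theorem 3.2]{RR} (since it is a \emph{linear} operator commuting with $M_z$, being the composition of an even number of antilinear maps). Then $|\varphi|=1$ because each factor is an isometry, so $C = C_\theta M_\varphi C_\alpha = M_{\bar\theta}C_\theta\cdot$\dots; unwinding, $C = C_{\beta}J^{\star}C_\alpha$ with $z\psi = \beta$ inner after checking analyticity exactly as in the proof of Theorem \ref{t1} (pairing $C$ of a reproducing-kernel-type vector against $\bar z$, or using that $\varphi\cdot(\text{inner})\in H^2$).

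The remaining work is to identify the divisibility constraints. Writing $C=C_\beta J^\star C_\alpha$ with $\beta=z\psi$: the involution condition $C^2=I$ forces, by the computation in Lemma \ref{L9}, $\beta\beta^{\#}=\alpha\alpha^{\#}$. The inclusion $C(\alpha H^2)=\beta H^2\subset\theta H^2$ forces $\theta\leqslant\beta$. Finally $\theta\leqslant\beta$ together with $\beta\beta^{\#}=\alpha\alpha^{\#}$ yields $\theta\theta^{\#}\leqslant\beta\beta^{\#}=\alpha\alpha^{\#}$ (here one uses that $\gamma\leqslant\delta$ implies $\gamma^{\#}\leqslant\delta^{\#}$, since $\#$ is multiplicative and preserves innerness), which is the stated necessary condition. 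I expect the main obstacle to be the bookkeeping in showing that the linear operator $C_\theta C C_\alpha$ is multiplication by a \emph{unimodular} $L^\infty$ function and that this function, times $z$, is \emph{inner} (not merely in $H^2$) — this is where the analyticity argument of Theorem \ref{t1}, applied with the extra factors $C_\theta,C_\alpha$, has to be redone carefully, keeping track of which model space the relevant test vectors land in so that Proposition \ref{bb} or \cite[Theorem 3.2]{RR} can be invoked.
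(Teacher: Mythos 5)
Your ``if'' direction is correct and matches the paper: once $C=C_\beta J^{\star}C_\alpha$ with $\beta$ inner, $\theta\leqslant\beta$ and $\beta\beta^{\#}=\alpha\alpha^{\#}$, Lemma \ref{L9}, Proposition \ref{ppp} and the computation $C(\alpha h)=\beta h^{\#}$ give $C(\alpha H^2)=\beta H^2\subset\theta H^2$.

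The ``only if'' direction, however, has a genuine gap, and it sits exactly at the step you defer. First, $C_\theta\,C\,C_\alpha$ is a composition of \emph{three} antilinear maps, hence antilinear, not linear; so \cite[Theorem 3.2]{RR} cannot be applied to it as you claim. Your fallback $E=J^{\star}C_\theta C C_\alpha$ is indeed linear and commutes with $M_z$, but the suggested application of Proposition \ref{bb} to $E$ restricted to some $\gamma K_\delta$ has no footing: $E$ is not a conjugation and the hypothesis provides no model-space inclusion, only an inclusion of $S$-invariant subspaces. The whole detour is also unnecessary, since Theorem \ref{c1} applies to $C$ itself and yields at once $C=M_\psi J^{\star}$ with $\psi$ unimodular and symmetric --- this is the paper's starting point. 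Second, the identification you propose, $\beta=z\psi$ ``inner after checking analyticity as in Theorem \ref{t1}'', is the wrong formula: it belongs to the $M_z$-conjugation setting of Theorem \ref{t1} and fails here (for instance if $\psi=\beta\overline{\alpha^{\#}}$ with $\alpha=\beta$ a Blaschke factor at a nonreal point, $z\psi\notin H^2$). The correct move, which your sketch never carries out, is to test $C$ on the single function $\alpha$: $C(\alpha)=\psi\alpha^{\#}\in\theta H^2$, so $\psi\alpha^{\#}=\theta u$ with $u\in H^2$ unimodular, hence inner; setting $\beta=\theta u$ gives $\psi=\beta\overline{\alpha^{\#}}$, $\theta\leqslant\beta$, the symmetry of $\psi$ gives $\beta\beta^{\#}=\alpha\alpha^{\#}$ (so $\theta\theta^{\#}\leqslant\alpha\alpha^{\#}$), and then $C=M_{\beta\overline{\alpha^{\#}}}J^{\star}=C_\beta J^{\star}C_\alpha$. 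Without this analyticity-plus-symmetry argument your plan does not produce the inner $\beta$ or the divisibility constraints, so as written the proof is incomplete; your concluding paragraph (involution forces $\beta\beta^{\#}=\alpha\alpha^{\#}$, inclusion forces $\theta\leqslant\beta$) is fine once that step is supplied.
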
%

	Let $\alpha$ be a fixed inner function. By Lemma \ref{L9}, for each inner function $\beta$ with $\beta\beta^{\#}=\alpha\alpha^{\#}$ there exists an $M_z$--commuting conjugation $C$ which maps $\alpha H^2$ onto $\beta H^2$, namely $C=C_{\beta}J^{\star}C_{\alpha}$. On the other hand, if $\beta$ is an inner function and there exists an $M_z$--commuting conjugation $C$ which maps $\alpha H^2$ onto $\beta H^2$, then by Theorem \ref{THMC}, $\beta\beta^{\#}\leqslant\alpha{\alpha^{\#}}$ and $C=C_{\gamma}J^{\star}C_{\alpha}$ for some inner function $\gamma$ such that $\beta\leqslant \gamma$, $\gamma\gamma^{\#}=\alpha\alpha^{\#}$. In particular, $C(\alpha H^2)=\gamma H^2=\beta H^2$ and so $\gamma$ is a constant multiple of $\beta$, $\beta\beta^{\#}=\alpha\alpha^{\#}$.
	
	It follows from the above that Lemma 5.3 characterizes all possible spaces of type $\beta H^2$ such that for a given $S$--invariant subspace $\alpha H^2$ there is an $M_z$--commuting conjugation mapping $\alpha H^2$ onto $\beta H^2$.
%

\begin{lemma}
Let $\alpha$ be a nonconstant inner function. Then \begin{multline}\qquad\{\beta: \beta \text{ is inner, }\alpha\alpha^{\#}= \beta\beta^{\#} \}\\= \{\lambda\, uv^{\#}: u, v \text{ are inner, } \alpha=uv, \lambda\in\mathbb{T}\}.\qquad \end{multline}
\end{lemma}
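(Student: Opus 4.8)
The plan is to prove the two inclusions separately. For the inclusion $\supseteq$, suppose $\alpha = uv$ with $u,v$ inner and $\beta = \lambda uv^{\#}$ with $\lambda\in\mathbb{T}$. Then $\beta$ is a product of two inner functions (note $v^{\#}$ is inner whenever $v$ is, since $|v^{\#}(z)| = |\overline{v(\bar z)}| = |v(\bar z)|$ and $v^{\#}\in H^2$ as in Proposition after Proposition~\ref{bb}), hence $\beta$ is inner. Moreover $\beta\beta^{\#} = \lambda uv^{\#}\cdot \overline{\lambda}\, u^{\#}v = u u^{\#} v v^{\#}$, while $\alpha\alpha^{\#} = uv\,u^{\#}v^{\#} = u u^{\#} v v^{\#}$ as well (using that pointwise multiplication of scalar-valued functions commutes and that $(\varphi\psi)^{\#} = \varphi^{\#}\psi^{\#}$). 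So the two products agree and $\beta$ lies in the left-hand set. This direction is essentially a one-line computation.

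The substantive direction is $\subseteq$. Fix an inner $\beta$ with $\alpha\alpha^{\#} = \beta\beta^{\#}$; I must exhibit a factorization $\alpha = uv$ and a unimodular constant $\lambda$ with $\beta = \lambda uv^{\#}$. The natural candidate is to let $u = \gcd(\alpha,\beta)$ (the greatest common inner divisor), write $\alpha = u v$ and $\beta = u w$ with $v,w$ inner and $\gcd(v,w) = 1$. Substituting into $\alpha\alpha^{\#} = \beta\beta^{\#}$ gives $uv\,u^{\#}v^{\#} = uw\,u^{\#}w^{\#}$, and cancelling the (non-vanishing on $\mathbb{T}$, hence invertible a.e.) factor $u u^{\#}$ yields $v v^{\#} = w w^{\#}$. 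Now I want to conclude $w = \lambda v^{\#}$. Since $v \leqslant w w^{\#}$ and $\gcd(v,w)=1$, $v$ must divide $w^{\#}$; symmetrically, applying $\#$ to $v v^{\#} = w w^{\#}$ (which is an involution preserving inner-ness and divisibility) and using $\gcd(v^{\#},w^{\#}) = \gcd(v,w)^{\#} = 1$, one gets $v^{\#} \leqslant w$. Combining $v \leqslant w^{\#}$ (equivalently $v^{\#}\leqslant w$) with $v^{\#}\leqslant w$ and a degree/norm count from $v v^{\#} = w w^{\#}$ forces $w$ and $v^{\#}$ to be constant multiples of each other, i.e. $w = \lambda v^{\#}$ for some $\lambda\in\mathbb{T}$. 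Then $\beta = uw = \lambda\, u v^{\#}$ with $\alpha = uv$, as required.

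The main obstacle I anticipate is making the final cancellation step fully rigorous: from $v v^{\#} = w w^{\#}$ together with $\gcd(v,w)=1$, concluding $w = \lambda v^{\#}$. The divisibility argument ($v \mid w^{\#}$ and $v^{\#}\mid w$) is clean, but to upgrade "$v^{\#}$ divides $w$ and they have the same value of $w w^{\star}$-type product" into "$w/v^{\#}$ is constant" one should argue that $w/v^{\#}$ is inner and that $(w/v^{\#})(w/v^{\#})^{\#} = 1$, so $w/v^{\#}$ is a symmetric inner function and therefore constant by the observation in Section~2 (a symmetric function in $H^2$ is also in $\overline{H^2}$, hence constant). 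Concretely: set $s = w/v^{\#}$, inner; then $s s^{\#} = (w w^{\#})/(v^{\#} v) = 1$, so $s^{\#} = \bar s = 1/s \in H^2$, forcing $s$ constant of modulus one. One must just be careful that every quotient written down is genuinely inner, which follows from the divisibility relations established a moment earlier. I would also remark, as a sanity check consistent with Lemma~\ref{L9} and Theorem~\ref{THMC}, that this lemma indeed enumerates exactly the targets $\beta H^2$ reachable from $\alpha H^2$ by an $M_z$-commuting conjugation.
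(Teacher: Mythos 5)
Your proof is correct and follows essentially the same route as the paper: factor out $u=\alpha\wedge\beta$, cancel $uu^{\#}$ to get $vv^{\#}=ww^{\#}$, and use coprimality to obtain mutual divisibility of $w$ and $v^{\#}$. Your final step (showing the quotient $s=w/v^{\#}$ is a symmetric inner function, hence a unimodular constant) is in fact a welcome tightening of the paper's terse ``thus we can take $v_1=\lambda v^{\#}$''.
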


For two inner functions $\alpha$ and $\beta$ denote by $\alpha \wedge\beta$ the greatest common divisor of $\alpha$ and $\beta$. We will write $\alpha \wedge\beta=1$ if the only common divisor of $\alpha$ and $\beta$ is a constant function.

\begin{proof}
Note that for $\alpha=uv$ and $\beta=\lambda uv^{\#}$ we have $\alpha\alpha^{\#}=\beta\beta^{\#}$, hence one inclusion is proved.
For the other inclusion let $u=\alpha\wedge\beta$ and we can write $\alpha=uv$ and $\beta=u v_1$. From the condition $\alpha\alpha^{\#}=\beta\beta^{\#}$ it follows that
$$uvu^{\#}v^{\#}=uv_1u^{\#}v_1^{\#}.$$
Hence
$vv^{\#}=v_1v_1^{\#}$. Since $v\wedge v_1=1$, we have that $v$ divides $v_1^{\#}$ and $v_1$ divides $v^{\#}$ and vice--versa. Thus we can take $v_1=\lambda  v^{\#}$ with $\lambda\in\mathbb{T}$, and so $\beta= \lambda uv^{\#}$.
\end{proof}
\begin{proof}[Proof \!of Theorem \ref{THMC}]
Assume firstly that $CM_z=M_zC$ and $C(\alpha H^2)\subset \theta H^2$. By Theorem \ref{c1}, $C=M_{\psi}J^{\star}$ for some unimodular symmetric function $\psi\in L^{\infty}$. In particular,
$$\psi\alpha^{\#}=M_{\psi}J^{\star}(\alpha)=C(\alpha)\in \theta H^2,$$
and there exists $u\in H^2$ such that $\psi\alpha^{\#}=\theta u$. Note that $u$ must be inner and so $\psi=\beta \overline{\alpha^{\#}}$ with $\beta=\theta u$, $\theta\leqslant\beta$. Clearly $\beta\overline{\alpha^{\#}}$ is symmetric, i.e., $\beta\beta^{\#}=\alpha\alpha^{\#}$.  Hence $\theta\theta^{\#}\leqslant\alpha\alpha^{\#}$.

Assume now that $\theta\theta^{\#}\leqslant\alpha\alpha^{\#}$, and let $\alpha= \alpha_1\cdot (\alpha\wedge\theta)$ and $\theta=\theta_1\cdot (\alpha\wedge\theta)$. Since $(\alpha\wedge\theta)^{\#}=\alpha^{\#}\wedge\theta^{\#}$, we get $\alpha^{\#}= \alpha_1^{\#}\cdot (\alpha^{\#}\wedge\theta^{\#})$ and $\theta^{\#}=\theta_1^{\#}\cdot (\alpha^{\#}\wedge\theta^{\#})$. Note also that $\theta_1\theta_1^{\#}\leqslant \alpha_1\alpha_1^{\#}$ and $\theta_1\wedge\alpha_1=1$, so $\theta_1\leqslant \alpha_1^{\#}$.
 Thus
$$\frac{\alpha\alpha^{\#}}{\theta\theta^{\#}}=\frac{\alpha_1\alpha_1^{\#}}{\theta_1 \theta_1^{\#}}=\frac{\alpha_1^{\#}}{\theta_1}\frac{\alpha_1}{\theta_1^{\#}}=u u^{\#},$$
where $u=\frac{\alpha_1^{\#}}{\theta_1}$ is an inner function. Now we may take $\beta=\theta u$.
Since $\theta\leqslant\beta$ and $\beta\beta^{\#}=\alpha{\alpha^{\#}}$, by Lemma \ref{L9} and by Proposition \ref{ppp}, $C=M_{\beta \overline{\alpha^{\#}}}J^{\star}=C_{\beta}J^{\star}C_{\alpha}$ is a conjugation which maps $\alpha H^2$ onto $\beta H^2\subset \theta H^2$.
%

\end{proof}

\begin{corollary}
Let $\theta$ be an inner function and let $C$ be an $M_z$--commuting conjugation in $L^2$. Then
\begin{enumerate}
  \item $C(\theta H^2)\subset \theta H^2$ if and only if $C=\lambda C_{\theta}J^{\star}C_{\theta}$ with $\lambda\in \mathbb{T}$;
  \item $C(\theta H^2)\subset \theta^{\#} H^2$ if and only if $C=\lambda J^{\star}$ with $\lambda\in \mathbb{T}$.
\end{enumerate}
\end{corollary}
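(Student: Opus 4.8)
The plan is to derive both equivalences as special cases of Theorem \ref{THMC}, which already characterizes all $M_z$--commuting conjugations $C$ with $C(\alpha H^2)\subset\theta H^2$. For part (1) I would set $\alpha=\theta$ in Theorem \ref{THMC}. The theorem then requires $\theta\theta^{\#}\leqslant\theta\theta^{\#}$, which is automatic, and gives $C=C_\beta J^\star C_\theta$ for some inner $\beta$ with $\theta\leqslant\beta$ and $\beta\beta^{\#}=\theta\theta^{\#}$. The remaining work is to show that these two conditions on $\beta$ force $\beta=\lambda\theta$ for some $\lambda\in\mathbb{T}$: writing $\beta=\theta\sigma$ with $\sigma$ inner (using $\theta\leqslant\beta$), the condition $\beta\beta^{\#}=\theta\theta^{\#}$ becomes $\theta\sigma\theta^{\#}\sigma^{\#}=\theta\theta^{\#}$, hence $\sigma\sigma^{\#}=1$, so $\sigma$ is symmetric and inner, hence constant by the observation in Section 2. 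Thus $\beta=\lambda\theta$ and $C=C_{\lambda\theta}J^\star C_\theta=\lambda C_\theta J^\star C_\theta$ (pulling the unimodular constant through, using $C_{\lambda\theta}=\overline{\lambda}C_\theta$ and antilinearity of $J^\star C_\theta$, so the overall scalar comes out as $\lambda$). Conversely $\lambda C_\theta J^\star C_\theta$ clearly maps $\theta H^2$ into itself and is a conjugation, as already noted before Lemma \ref{L9}.

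For part (2) I would set $\alpha=\theta$ and replace the target $\theta$ of Theorem \ref{THMC} by $\theta^{\#}$. One checks first that $\theta^{\#}$ is inner (it is the standard fact $|\theta^{\#}|=1$ a.e. and $\theta^{\#}\in H^2$, which follows from $J^\star H^2=H^2$ and $|\theta^\#(z)|=|\theta(\bar z)|$). The hypothesis of Theorem \ref{THMC} becomes $\theta^{\#}(\theta^{\#})^{\#}\leqslant\theta\theta^{\#}$; since $(\theta^{\#})^{\#}=\theta$, this reads $\theta^{\#}\theta\leqslant\theta\theta^{\#}$, again automatic. The theorem then yields $C=C_\beta J^\star C_\theta$ with $\theta^{\#}\leqslant\beta$ and $\beta\beta^{\#}=\theta\theta^{\#}$. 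Now I argue $\beta=\lambda\theta^{\#}$: from $\theta^{\#}\leqslant\beta$ write $\beta=\theta^{\#}\sigma$, so $\theta^{\#}\sigma\theta\sigma^{\#}=\theta\theta^{\#}$ gives $\sigma\sigma^{\#}=1$, hence $\sigma$ symmetric inner, hence constant, so $\beta=\lambda\theta^{\#}$. Then $C=C_{\lambda\theta^{\#}}J^\star C_\theta=\lambda C_{\theta^{\#}}J^\star C_\theta$, and using $J^\star C_\theta=C_{\theta^{\#}}J^\star$ (Proposition before Theorem \ref{bb1}, part (3)) together with $C_{\theta^{\#}}C_{\theta^{\#}}=M_{\theta^{\#}\overline{\theta^{\#}}}=M_1=I$ (Proposition \ref{ppp}(1)) gives $C=\lambda M_1 J^\star=\lambda J^\star$. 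Conversely $\lambda J^\star$ maps $\theta H^2$ onto $\theta^{\#}H^2$ by Proposition (1) of the displayed proposition in Section 4, and is a conjugation.

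The main obstacle, such as it is, is bookkeeping the unimodular constants correctly when commuting them past $C_\beta$ and $J^\star$: since $C_\beta$ depends antilinearly on a reparametrization one must track whether $\lambda$ or $\overline\lambda$ appears, but because the final object is only claimed up to a modulus--one factor this does not affect the statement. The only genuinely substantive input beyond Theorem \ref{THMC} is the elementary lemma that a symmetric inner function is constant, which the paper has already recorded in Section 2. I would phrase the proof compactly: invoke Theorem \ref{THMC} with the appropriate choices, reduce the constraint on $\beta$ to ``$\beta/\theta$ (resp. $\beta/\theta^{\#}$) is symmetric and inner, hence constant'', and then simplify the resulting composition of conjugations using Proposition \ref{ppp} and the identity $J^\star C_\alpha=C_{\alpha^\#}J^\star$.
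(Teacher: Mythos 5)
Your argument is correct and is essentially the paper's own proof: both parts are read off from Theorem \ref{THMC} with $\alpha=\theta$ (target $\theta$, resp.\ $\theta^{\#}$), reducing the constraints $\theta\leqslant\beta$ (resp.\ $\theta^{\#}\leqslant\beta$) and $\beta\beta^{\#}=\theta\theta^{\#}$ to the observation that the quotient is a symmetric inner function and hence a unimodular constant, which is exactly what the paper does (it only writes ``the proof of (2) is similar'' for the second part, which you carry out explicitly via $J^{\star}C_{\theta}=C_{\theta^{\#}}J^{\star}$ and Proposition \ref{ppp}). The sole nit is the scalar bookkeeping, since $C_{\lambda\theta}=\lambda C_{\theta}$ rather than $\overline{\lambda}C_{\theta}$, but as you note this is immaterial because the conclusion is stated only up to a modulus-one factor.
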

\begin{proof}
By Theorem \ref{THMC},  $C(\theta H^2)\subset \theta H^2$ if and only if there exists an inner function $\beta$ such that
$\theta\leqslant\beta$ and $\beta\beta^{\#}=\theta{\theta^{\#}}$. This is only possible if $\beta$ is constant multiple of $\theta$ and (1) is proved. The proof of (2) is similar.
\end{proof}

%

Note that by Theorem \ref{THMC} (Lemma \ref{L9}, actually) if $\theta\overline{\alpha^{\#}}$ is symmetric, then there exists an $M_z$--commuting conjugation from $\alpha H^2$ into $\theta H^2$. The following example shows that in that case there may be no such conjugation between the corresponding model spaces $K_{\alpha}$ and $K_{\theta}$.
\begin{example}
Fix $a,b\in\mathbb{D}$ such that $a\neq b$, $a\neq \overline{a}$ and $b\neq \overline{b}$, and put
$$\alpha(z)=\tfrac{a-z}{1-\overline{a}z}\ \tfrac{b-z}{1-\overline{b}z}\qquad\text{and}\qquad \theta(z)=\tfrac{a-z}{1-\overline{a}z}\ \tfrac{\overline{b}-z}{1-{b}z}.$$
Then
$$\alpha^{\#}(z)=\tfrac{\overline{a}-z}{1-{a}z}\ \tfrac{\overline{b}-z}{1-{b}z}\qquad\text{and}\qquad \theta^{\#}(z)=\tfrac{\overline{a}-z}{1-{a}z}\ \tfrac{{b}-z}{1-\overline{b}z}$$
and so $\alpha\alpha^{\#}=\theta\theta^{\#}$. 
 Thus there exists an $M_z$--commuting conjugation from $\alpha H^2$ onto $\theta H^2$. In this case however neither $\alpha\leqslant\theta^{\#}$ nor $\theta\leqslant\alpha^{\#}$, so by Theorem \ref{bb1} no $M_z$--commuting conjugation between $K_{\alpha}$ and $K_{\theta}$ exists. Here also neither $\alpha\leqslant\theta$ nor $\theta\leqslant\alpha$, and so by Theorem \ref{t1} no $M_z$--conjugation between $K_{\alpha}$ and $K_{\theta}$ exists.
\end{example}

Finally, consider $M_z$--conjugations preserving $S$--invariant subspaces.

\begin{proposition}\label{THMC1}
Let $\theta$ and $\alpha$ be two inner functions.
There are no $M_z$--conjugations in $L^2$ which map $\alpha H^2$ into $\theta H^2$
\end{proposition}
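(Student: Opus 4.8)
The plan is a short proof by contradiction resting on the parametrisation of $M_z$--conjugations in Theorem~\ref{c11}, running parallel to (and containing as the case $\alpha=\theta=1$) the Section~3 corollary that no $M_z$--conjugation preserves $H^2$. Suppose $C$ is an $M_z$--conjugation in $L^2$ with $C(\alpha H^2)\subset\theta H^2$. By Theorem~\ref{c11} we may write $C=M_\psi J$ with $\psi\in L^\infty$, $|\psi|=1$ a.e.\ on $\mathbb{T}$.

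The decisive step is to feed the whole family $\{\alpha z^n:n\geqslant 0\}\subset\alpha H^2$ through the hypothesis, not just a single vector. For every $n\geqslant 0$,
\[
C(\alpha z^n)=M_\psi J(\alpha z^n)=\psi\,\overline{\alpha z^n}=\psi\bar\alpha\,\bar z^n\in\theta H^2,
\]
and since $|\theta|=1$ a.e.\ this is equivalent to $\bar\theta\psi\bar\alpha\,\bar z^n\in H^2$. Put $h=\bar\theta\psi\bar\alpha$; then $h\in L^\infty$, $|h|=1$ a.e., and $h\bar z^n\in H^2$ for all $n\geqslant 0$. Taking $n=0$ gives $h\in H^2$; then, for each $n\geqslant 1$, the condition $h\bar z^n\in H^2$ forces $\widehat h(m)=0$ for all $m<n$ (equivalently, the first $n$ Taylor coefficients of $h$ at $0$ vanish), and letting $n\to\infty$ yields $h\equiv 0$. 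This contradicts $|h|=1$ a.e. Hence no $M_z$--conjugation can map $\alpha H^2$ into $\theta H^2$, proving Proposition~\ref{THMC1}. (Note this argument makes no nonconstancy assumption on $\alpha$ or $\theta$, so it also covers the degenerate cases $\alpha H^2=H^2$ or $\theta H^2=H^2$.)

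There is no real obstacle here; the only point worth a sentence of care is that one must use infinitely many of the vectors $\alpha z^n$ — a single vector only yields that $h$ is inner, which is not contradictory — together with the elementary fact that a nonzero $H^2$ function cannot vanish to infinite order at the origin. This is exactly the mechanism behind the Section~3 proof that $M_z$--conjugations never preserve $H^2$, here localised to the shift-invariant subspaces $\alpha H^2$ and $\theta H^2$ via the identity $J(\alpha z^n)=\psi\bar\alpha\bar z^n$.
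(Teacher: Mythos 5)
Your proof is correct and essentially identical to the paper's: the paper also writes $C=M_\psi J$ via Theorem~\ref{c11}, applies $C$ to $\alpha h$ for all $h\in H^2$ (your $h=\bar\theta\psi\bar\alpha$ is exactly the paper's inner function $g$ defined by $\psi\bar\alpha=\theta g$), and derives the same contradiction $g\bar h\in H^2$ for all $h\in H^2$ forcing $g=0$. You merely spell out the Fourier-coefficient step that the paper leaves implicit.
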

\begin{proof}
If $C$ was such a conjugation, then by Theorem \ref{c11}, $C=M_{\psi}J$ for some unimodular function $\psi\in L^{\infty}$ and, in particular,
$$C(\alpha)=\psi\overline{\alpha}=\theta g\quad\text{for some }g\in H^2.$$ Clearly $g$ must be an inner function and $\psi =\alpha\theta g$. Then, for every $h\in H^2$,
$$C(\alpha h)=\alpha\theta g\overline{\alpha h}=\theta g\overline{ h}\in \theta H^2,$$
and so $g\overline{h}\in H^2$. It follows that $g=0$ and $C(\alpha)=0$ which is a contradiction.
\end{proof}
%
%
%
%

\section{Conjugations and truncated Toeplitz operators}
For $\varphi\in L^2$ define the {\it truncated Toeplitz operator} $A^\theta_\varphi$ by
$$A^\theta_\varphi f=P_\theta (\varphi f), \text{ for } f\in H^\infty\cap \kdt,$$
were $P_\theta\colon L^2\to \kdt$ is the orthogonal projection (see \cite{Sarason}). The operator $A^\theta_\varphi$ is closed and densely defined, and if it is bounded, it admits a unique bounded extension to $\kdt$. The set of all bounded truncated Toeplitz operators on $\kdt$ is denoted by $\mathcal{T}(\theta)$. Note that $A^\theta_\varphi\in \mathcal{T}(\theta)$ for $\varphi\in L^\infty$. It is known that every operator from $\mathcal{T}(\theta)$ is $C_{\theta}$--symmetric (see \cite[Lemma 2.1]{Sarason}).

Observe that if $k\geqslant 0$, then the conjugation $C_{k,l}$ defined by \eqref{ckl} satisfies neither
$M_zC_{k,l}=C_{k,l}M_z$ nor $SC_{k,l}=C_{k,l}S$. 
However, for $0\leqslant n<k$ and
$\theta(z)=z^n$,
$$C_{k,l}(K_\theta)=K_\theta\quad\text{and}\quad
A_z^{\theta}C_{k,l}=C_{k,l}A_z^{\theta}$$ (since here
$C_{k,l|K_{\theta}}=J^{\star}_{|K_{\theta}}$ and
$\theta^{\#}=\theta$).


Theorem below characterizes conjugations intertwining truncated shifts $ A_z^{\theta}$ and $A_z^{\theta^{\#}}$.
\begin{theorem}\label{c3}
Let $\theta$ be a nonconstant inner function and
let $C$ be a conjugation in $L^2$ such that $C(K_\theta)\subset K_{\theta^{\#}}$.
Then the following are equivalent:
\begin{enumerate}
\item $A_{\varphi^{\#}}^{\theta^{\#}} C=C A_{\varphi}^{\theta}$ on $K_{\theta}$ for all $\varphi\in H^\infty$,
\item $ A_z^{\theta^{\#}} C=C A_z^{\theta}$ on $K_{\theta}$,
 \item there is a function $\psi\in H^{\infty}$ such that
 $C_{|K_{\theta}}= J^{\star}A_{\psi}^{\theta}$ and $A_{\psi}^{\theta}$ is an isometry,
 \item there is a function $\psi'\in H^{\infty}$ such that
 $C_{|K_{\theta}}= A_{\psi'}^{\theta^{\#}}J^{\star}_{|K_{\theta}}$ and $A_{\psi'}^{\theta^{\#}}$ is an isometry.
 \end{enumerate}
\end{theorem}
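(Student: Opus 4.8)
The plan is to prove the cycle $(1)\Rightarrow(2)\Rightarrow(3)\Rightarrow(1)$ together with the equivalence $(3)\Leftrightarrow(4)$. The implication $(1)\Rightarrow(2)$ is immediate, taking $\varphi=z$ and using $z^{\#}=z$. For the remaining implications I would first record three elementary facts about $J^{\star}$: it is multiplicative with $(J^{\star})^{2}=I_{L^{2}}$; a conjugation carries the orthogonal projection onto a subspace to the orthogonal projection onto its image, so that $J^{\star}P_{\theta}J^{\star}=P_{\theta^{\#}}$ and $J^{\star}K_{\theta^{\#}}=K_{\theta}$; and, combining these with Proposition~\ref{Jstar}, for every $\varphi\in H^{\infty}$ one has $J^{\star}A_{\varphi}^{\theta}J^{\star}=A_{\varphi^{\#}}^{\theta^{\#}}$, equivalently $J^{\star}A_{\varphi}^{\theta}=A_{\varphi^{\#}}^{\theta^{\#}}J^{\star}$ on $K_{\theta}$ and $J^{\star}A_{\varphi}^{\theta^{\#}}=A_{\varphi^{\#}}^{\theta}J^{\star}$ on $K_{\theta^{\#}}$; in particular $J^{\star}A_{z}^{\theta^{\#}}=A_{z}^{\theta}J^{\star}$ on $K_{\theta^{\#}}$.

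With these in hand, $(3)\Rightarrow(1)$ is a short computation. If $C_{|K_{\theta}}=J^{\star}A_{\psi}^{\theta}$, then for $\varphi\in H^{\infty}$, using the commutativity $A_{\psi}^{\theta}A_{\varphi}^{\theta}=A_{\psi\varphi}^{\theta}=A_{\varphi}^{\theta}A_{\psi}^{\theta}$ of analytic truncated Toeplitz operators,
\[
CA_{\varphi}^{\theta}=J^{\star}A_{\psi}^{\theta}A_{\varphi}^{\theta}=J^{\star}A_{\varphi}^{\theta}A_{\psi}^{\theta}=A_{\varphi^{\#}}^{\theta^{\#}}J^{\star}A_{\psi}^{\theta}=A_{\varphi^{\#}}^{\theta^{\#}}C\qquad\text{on }K_{\theta}.
\]
For $(3)\Leftrightarrow(4)$, the identity $J^{\star}A_{\psi}^{\theta}=A_{\psi^{\#}}^{\theta^{\#}}J^{\star}$ on $K_{\theta}$ shows that $C_{|K_{\theta}}=J^{\star}A_{\psi}^{\theta}$ is the same statement as $C_{|K_{\theta}}=A_{\psi^{\#}}^{\theta^{\#}}J^{\star}_{|K_{\theta}}$, so one passes from one to the other via $\psi'=\psi^{\#}$ and $\psi=(\psi')^{\#}$; and $A_{\psi}^{\theta}$ is an isometry if and only if $A_{\psi^{\#}}^{\theta^{\#}}=J^{\star}A_{\psi}^{\theta}J^{\star}$ is, since $J^{\star}$ is an antilinear isometry.

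The heart of the matter is $(2)\Rightarrow(3)$. Put $V:=J^{\star}C_{|K_{\theta}}$; since $C_{|K_{\theta}}\colon K_{\theta}\to K_{\theta^{\#}}$ and $J^{\star}\colon K_{\theta^{\#}}\to K_{\theta}$ are antilinear isometries, $V$ is a linear isometry of $K_{\theta}$ into itself. Multiplying the relation $CA_{z}^{\theta}=A_{z}^{\theta^{\#}}C$ from $(2)$ on the left by $J^{\star}$ and using $J^{\star}A_{z}^{\theta^{\#}}=A_{z}^{\theta}J^{\star}$ on $K_{\theta^{\#}}$ gives $VA_{z}^{\theta}=A_{z}^{\theta}V$, so $V$ lies in the commutant of the truncated shift $A_{z}^{\theta}$. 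By the classical description of that commutant --- every bounded operator commuting with $A_{z}^{\theta}$ has the form $A_{\psi}^{\theta}$ for some $\psi\in H^{\infty}$ (see \cite{Sarason}) --- there is $\psi\in H^{\infty}$ with $V=A_{\psi}^{\theta}$, and $A_{\psi}^{\theta}$ is an isometry because $V$ is; hence $C_{|K_{\theta}}=J^{\star}V=J^{\star}A_{\psi}^{\theta}$, which is $(3)$. The only substantial ingredient is this commutant theorem; everything else is bookkeeping with the intertwining relations of $J^{\star}$ gathered in Proposition~\ref{Jstar} and the remark that conjugating a projection by a conjugation yields the projection onto the conjugated subspace. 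A point to be careful about is that $(3)$ and $(4)$ describe only the restriction $C_{|K_{\theta}}$, and that $V$ is a priori merely an isometry \emph{into} $K_{\theta}$ --- which is exactly what the commutant argument requires.
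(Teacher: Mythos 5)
Your proof is correct and follows essentially the same route as the paper: for $(2)\Rightarrow(3)$ you form $J^{\star}C_{|K_{\theta}}$, use the intertwining relation $J^{\star}A_{\varphi^{\#}}^{\theta^{\#}}=A_{\varphi}^{\theta}J^{\star}$ together with Sarason's description of the commutant of $A_z^{\theta}$, exactly as in the paper. The only difference is that you also write out the remaining implications, which the paper dismisses as straightforward.
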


\begin{proof}
We will only prove that $(2)\Rightarrow (3)$.
Since $J^{\star}(K_{\theta^{\#}})=K_\theta$
and $J^{\star}A_{\varphi^{\#}}^{\theta^{\#}} = A_{\varphi}^{\theta}J^{\star}$ for all $\varphi\in H^{\infty}$ (see \cite[Lemma 4.5]{CGW}), we have
$$J^{\star}CA_z^{\theta}=J^{\star}A_z^{\theta^{\#}}C=A_z^{\theta}J^{\star}C$$
on $K_{\theta}$ and so $J^{\star}C_{|K_{\theta}}=A_{\psi}^{\theta}$ for some $\psi\in H^{\infty}$ (\cite[Theorem 14.38]{FM}). Hence $A_{\psi}^{\theta}$ is an isometry and
$$C_{|K_{\theta}}=J^{\star}A_{\psi}^{\theta}.$$
\end{proof}

It is much more restrictive if $\theta^{\#}=\theta$.

\begin{proposition}
Let $\theta$ be an inner function such that $\theta^{\#}=\theta$ and
let $C$ be a conjugation in $K_\theta$. Then the following are equivalent:
\begin{enumerate}
\item $A_{\varphi^{\#}}^{\theta}C=CA_{\varphi}^{\theta}$ for all $\varphi\in H^\infty$,
\item $ A_z^{\theta} C= CA_z^{\theta}$,
 \item 
 $C=\lambda J^{\star}_{|K_{\theta}}$ with $\lambda\in\mathbb{T}$.
 \end{enumerate}
\end{proposition}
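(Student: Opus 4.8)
I would prove the cycle $(3)\Rightarrow(1)\Rightarrow(2)\Rightarrow(3)$. The first implication is immediate: taking $\varphi=z$ in $(1)$ and using $z^{\#}=z$ gives $A_z^{\theta}C=CA_z^{\theta}$, which is $(2)$. For $(3)\Rightarrow(1)$ one writes $C=\lambda J^{\star}_{|K_{\theta}}$ with $\lambda\in\mathbb{T}$ and recalls that, since $\theta^{\#}=\theta$, \cite[Lemma 4.5]{CGW} reads $J^{\star}A_{\varphi}^{\theta}=A_{\varphi^{\#}}^{\theta}J^{\star}$ on $K_{\theta}$ for every $\varphi\in H^{\infty}$; hence $A_{\varphi^{\#}}^{\theta}C=\lambda A_{\varphi^{\#}}^{\theta}J^{\star}=\lambda J^{\star}A_{\varphi}^{\theta}=CA_{\varphi}^{\theta}$.

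The real content is $(2)\Rightarrow(3)$. Since $\theta^{\#}=\theta$, $J^{\star}_{|K_{\theta}}$ is a conjugation in $K_{\theta}$ and commutes with $A_z^{\theta}$ there. Arguing exactly as in the proof of Theorem~\ref{c3} (whose argument only involves the action on $K_{\theta}$), the linear operator $J^{\star}_{|K_{\theta}}C$ commutes with $A_z^{\theta}$; by the commutant description of $A_z^{\theta}$ (\cite[Theorem 14.38]{FM}) it equals $A_\psi^{\theta}$ for some $\psi\in H^{\infty}$, so that $C=J^{\star}_{|K_{\theta}}A_\psi^{\theta}$ on $K_{\theta}$. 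Because $C$ and $J^{\star}_{|K_{\theta}}$ are bijective isometries of $K_{\theta}$, so is $A_\psi^{\theta}=J^{\star}_{|K_{\theta}}C$; in particular $A_\psi^{\theta}$ is unitary and onto.

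The crux is then to extract that $\psi$ is constant modulo $\theta H^{2}$. Using $A_\psi^{\theta}J^{\star}=J^{\star}A_{\psi^{\#}}^{\theta}$ on $K_{\theta}$ (again \cite[Lemma 4.5]{CGW} with $\theta^{\#}=\theta$), one computes
\[
I_{K_{\theta}}=C^{2}=J^{\star}A_\psi^{\theta}J^{\star}A_\psi^{\theta}=J^{\star}J^{\star}A_{\psi^{\#}}^{\theta}A_\psi^{\theta}=A_{\psi^{\#}}^{\theta}A_\psi^{\theta}.
\]
Combined with $A_{\bar\psi}^{\theta}A_\psi^{\theta}=(A_\psi^{\theta})^{*}A_\psi^{\theta}=I_{K_{\theta}}$ and the surjectivity of $A_\psi^{\theta}$, this forces $A_{\psi^{\#}-\bar\psi}^{\theta}=0$, hence $\psi^{\#}-\bar\psi\in\theta H^{2}+\overline{\theta H^{2}}$ by Sarason's characterization of the kernel of $\varphi\mapsto A_\varphi^{\theta}$ (\cite{Sarason}). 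Writing $\psi^{\#}-\bar\psi=\theta h_1+\overline{\theta h_2}$ with $h_1,h_2\in H^{2}$ and rearranging gives $\psi^{\#}-\theta h_1=\overline{\psi+\theta h_2}\in H^{2}\cap\overline{H^{2}}=\mathbb{C}$, so $\psi^{\#}=c+\theta h_1$ for some $c\in\mathbb{C}$; applying $\#$ and using $\theta^{\#}=\theta$ yields $\psi=\bar c+\theta h_1^{\#}\in\bar c+\theta H^{2}$, whence $A_\psi^{\theta}=A_{\bar c}^{\theta}=\bar c\,I_{K_{\theta}}$, and $|c|=1$ since $A_\psi^{\theta}$ is unitary. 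Therefore $C=J^{\star}_{|K_{\theta}}A_\psi^{\theta}=\lambda J^{\star}_{|K_{\theta}}$ with $\lambda=c\in\mathbb{T}$ (note $J^{\star}(\bar c\,x)=c\,J^{\star}x$), which is $(3)$.

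The main obstacle is exactly this last paragraph: a unitary truncated Toeplitz operator with analytic symbol need not have a constant symbol (symbols are only determined modulo $\theta H^{2}+\overline{\theta H^{2}}$), so unitarity of $A_\psi^{\theta}$ alone does not force $\psi$ constant; one genuinely has to bring in both the relation $A_{\psi^{\#}}^{\theta}A_\psi^{\theta}=I_{K_{\theta}}$ coming from $C^{2}=I_{K_{\theta}}$ and the hypothesis $\theta^{\#}=\theta$. Apart from results already in the excerpt, the only external inputs are the classical commutant theorem for $A_z^{\theta}$ and Sarason's zero-symbol characterization.
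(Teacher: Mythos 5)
Your proof is correct and takes essentially the same route as the paper: the substantive steps — showing $J^{\star}_{|K_{\theta}}C$ commutes with $A_z^{\theta}$ and hence equals $A_{\psi}^{\theta}$ by the commutant theorem, observing $A_{\psi}^{\theta}$ is unitary, combining $C^2=I_{K_{\theta}}$ (giving $A_{\psi^{\#}}^{\theta}A_{\psi}^{\theta}=I_{K_{\theta}}$) with $A_{\overline{\psi}}^{\theta}A_{\psi}^{\theta}=I_{K_{\theta}}$ and Sarason's description of zero symbols to force $\psi$ constant modulo $\theta H^2$ — coincide with the paper's. The only cosmetic difference is that the paper first extends $C$ to the conjugation $C\oplus C_{\theta}$ on $L^2$ so as to invoke Theorem \ref{c3} verbatim, whereas you rerun its commutant argument directly inside $K_{\theta}$, which is equally valid since that argument only uses the action on $K_{\theta}$ and the hypothesis $\theta^{\#}=\theta$.
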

\begin{proof}
Implications $(1)\Rightarrow (2)$ and $(3)\Rightarrow (1)$ are clear.
To prove $(2)\Rightarrow (3)$ apply Theorem \ref{c3} to the conjugation $\tilde{C}$ in $L^2$ defined by $$\tilde{C}=C\oplus C_{\theta}\ \colon\ K_{\theta}\oplus (K_{\theta})^{\perp}\rightarrow K_{\theta}\oplus (K_{\theta})^{\perp}.$$ It follows that $\tilde{C}_{|K_{\theta}}=C=J^{\star}A_{\psi}^{\theta}$ for $\psi\in H^{\infty}$ such that $A_{\psi}^{\theta}$ is an isometry. Since $C(K_{\theta})=K_{\theta}$ and $J^{\star}(K_{\theta})=K_{\theta^{\#}}=K_{\theta}$, we see that $A_{\psi}^{\theta}=J^{\star}C$ maps $K_{\theta}$ onto $K_{\theta}$ and is in fact unitary. Thus we have
$$A_{\overline{\psi}}^{\theta}A_{\psi}^{\theta}=A_{\psi}^{\theta}A_{\overline{\psi}}^{\theta}=I_{K_{\theta}}.$$
On the other hand, $C^2=I_{K_{\theta}}$ so
$$C^2=J^{\star}A_{\psi}^{\theta}J^{\star}A_{\psi}^{\theta}=A_{\psi^{\#}}^{\theta}A_{\psi}^{\theta}=A_{\psi}^{\theta}A_{\psi^{\#}}^{\theta}=I_{K_{\theta}}.$$
Hence $A_{\overline{\psi}}^{\theta}=A_{\psi^{\#}}^{\theta}$ and
$A_{\overline{\psi}-\psi^{\#}}^{\theta}=0$, which gives
$\overline{\psi}-\psi^{\#}\in \overline{\theta H^2}+\theta H^2$ (see \cite{Sarason}). In
other words, $\overline{\psi}-\psi^{\#}=\overline{\theta h_1}+\theta
h_2$ for some functions $h_1, h_2\in H^2$. Thus there exists a
constant $\lambda$ such that $${\psi}-\theta
h_1=\overline{\psi^{\#}+\theta h_2}=\overline{\lambda}.$$ We now have
$$A_{\psi}^{\theta}=A_{\theta
h_1+\overline{\lambda}}^{\theta}=\overline{\lambda} I_{K_{\theta}}.$$
Moreover
$\lambda\in\mathbb{T}$, since $A_{\psi}^{\theta}$ is unitary. Hence
\[C=J^{\star}A_{\psi}^{\theta}={\lambda} J^{\star}_{|K_{\theta}}.\]
\end{proof}

%
%


 Now we characterize conjugations intertwining the truncated shifts $ A_z^{\theta}$ and $A_{\bar z}^{\theta}$.
\begin{theorem}\label{c4}
Let $\theta$ be an inner function and let $C$ be a conjugation in
$K_\theta$. Then the following are
equivalent:
\begin{enumerate}
\item $A_{\varphi}^{\theta} C=C A_{\overline{\varphi}}^{{\theta}}$ for all $\varphi\in H^\infty$,
\item $ A_z^{\theta} C=C A_{\bar z}^{\theta}$ ,
 \item there is a function $\psi\in H^\infty$ such that
 $C=A_\psi^{\theta}C_{\theta}$ and $A_\psi^{\theta}$
 is unitary,
 \item there is a function $\psi^\prime\in H^\infty$ such that
 $C=C_{\theta}A_{\overline{\psi^\prime}}^{\theta}$ and $A_{\psi^\prime}^{\theta}$
 is unitary.
 \end{enumerate}
\end{theorem}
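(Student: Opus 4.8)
\textbf{Proof proposal for Theorem \ref{c4}.}

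The plan is to mirror the structure of the proof of Theorem \ref{c3}, replacing $J^{\star}$ by $C_{\theta}$ as the ``model'' conjugation, since here we want an operator intertwining $A_z^{\theta}$ with $A_{\bar z}^{\theta}$ on the \emph{same} model space. The implications $(1)\Rightarrow(2)$, $(3)\Rightarrow(1)$ and $(4)\Rightarrow(1)$ are routine, as is the equivalence $(3)\Leftrightarrow(4)$ via $A_{\psi}^{\theta}C_{\theta}=C_{\theta}A_{\psi^{\#}\text{-type}}^{\theta}$; so the real content is $(2)\Rightarrow(3)$.

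For $(2)\Rightarrow(3)$, the key point is that $C_{\theta}$ itself intertwines the truncated shifts in the right way: recall $C_{\theta}A_{\varphi}^{\theta}=A_{\overline{\varphi}}^{\theta}C_{\theta}$ for $\varphi\in H^{\infty}$ (this is the $C_{\theta}$--symmetry of truncated Toeplitz operators, \cite[Lemma 2.1]{Sarason}), and in particular $C_{\theta}A_{\bar z}^{\theta}=A_z^{\theta}C_{\theta}$. So if $A_z^{\theta}C=CA_{\bar z}^{\theta}$, then the \emph{linear} operator $C C_{\theta}$ on $K_{\theta}$ satisfies
$$
A_z^{\theta}(CC_{\theta})=C A_{\bar z}^{\theta}C_{\theta}=C C_{\theta}A_z^{\theta},
$$
i.e.\ $CC_{\theta}$ commutes with $A_z^{\theta}$. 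By the commutant description of the truncated shift (\cite[Theorem 14.38]{FM}, as used in the proof of Theorem \ref{c3}), $CC_{\theta}=A_{\psi}^{\theta}$ for some $\psi\in H^{\infty}$. Hence $C=A_{\psi}^{\theta}C_{\theta}$, and since $C$ is a surjective antilinear isometry and $C_{\theta}$ is a conjugation, $A_{\psi}^{\theta}=CC_{\theta}$ is a surjective linear isometry of $K_{\theta}$, i.e.\ unitary. This gives $(3)$; applying $C_{\theta}$ on the other side (and using $C_{\theta}A_{\psi}^{\theta}=A_{\overline{\psi}}^{\theta}C_{\theta}$ together with $C_\theta^2=I$) yields $(4)$ with $\psi' = \psi$.

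It remains to check that $C=A_{\psi}^{\theta}C_{\theta}$ with $A_{\psi}^{\theta}$ unitary is genuinely an involution, to close $(3)\Rightarrow(1)$ cleanly — here one computes $C^2 = A_{\psi}^{\theta}C_{\theta}A_{\psi}^{\theta}C_{\theta} = A_{\psi}^{\theta}A_{\overline{\psi}}^{\theta}$ using the $C_\theta$-symmetry, and unitarity of $A_{\psi}^{\theta}$ forces $A_{\psi}^{\theta}A_{\overline{\psi}}^{\theta}=I_{K_{\theta}}$, so $C^2=I_{K_{\theta}}$; combined with the isometry property (clear since $A_{\psi}^{\theta}$ unitary and $C_{\theta}$ a conjugation) this confirms $C$ is a conjugation, and the intertwining $(1)$ follows from $A_{\varphi}^{\theta}A_{\psi}^{\theta}=A_{\psi}^{\theta}A_{\varphi}^{\theta}$ for $\varphi,\psi\in H^{\infty}$ (commutativity of analytic truncated Toeplitz operators) plus the $C_{\theta}$-symmetry. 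The main obstacle I anticipate is not conceptual but bookkeeping: making sure the identity $J^{\star}$-style relations are replaced by the correct $C_{\theta}$-conjugation relations, and in particular verifying that the commutant result for $A_z^{\theta}$ applies to the linear operator $CC_{\theta}$ (rather than to $C$ itself), which is exactly the device that makes the argument work.
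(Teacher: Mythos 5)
Your proposal is correct and follows essentially the same route as the paper: both form the linear operator $CC_{\theta}$, show it commutes with $A_z^{\theta}$ via the $C_{\theta}$-symmetry of $A_z^{\theta}$, invoke the commutant description of the truncated shift to write $CC_{\theta}=A_{\psi}^{\theta}$ with $\psi\in H^{\infty}$ (the paper cites Bercovici, you cite the equivalent Fricain--Mashreghi result), and obtain unitarity and the remaining implications from $C_{\theta}$-symmetry together with commutativity of (co-)analytic truncated Toeplitz operators. The only deviation is your extra verification that $C$ is an involution, which is harmless but unnecessary since $C$ is assumed to be a conjugation in the statement.
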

\begin{proof}
Let us start with $(2)\Rightarrow (3)$.
Since $A_z^{\theta}$ is $C_{\theta}$--symmetric,
$$A_z^{\theta} CC_{\theta}=CA_{\bar
z}^{\theta}C_{\theta}=CC_{\theta}A_z^{\theta}.$$  Hence, by \cite[Proposition 1.21]{berc},
$CC_{\theta}=A_\psi^{\theta}$ for some $\psi\in H^\infty$. Clearly,
$A_\psi^{\theta}$ is unitary and
$$C=A_\psi^{\theta}C_{\theta}=C_{\theta}(A_\psi^{\theta})^{*}=C_{\theta}A_{\overline{\psi}}^{\theta}.$$

To prove that $(4)\Rightarrow (1)$ note that,
since $A_{\overline{\psi^\prime}}^{\theta}$ and
$A_{\overline{\varphi}}^{\theta}$ commute, we have
$$A_{\varphi}^{\theta}C=A_{\varphi}^{\theta}C_{\theta}A_{\overline{\psi^\prime}}^{\theta}
=C_{\theta}A_{\overline{\varphi}}^{\theta}A_{\overline{\psi^\prime}}^{\theta}=C_{\theta}A_{\overline{\psi^\prime}}^{\theta}A_{\overline{\varphi}}^{\theta}=CA_{\overline{\varphi}}^{\theta}=C(A_{{\varphi}}^{\theta})^{*}.$$
All other implications are straightforward.
\end{proof}

\begin{corollary}
If $C$ is a conjugation in $K_\theta$
and every $A\in \mathcal{T}(\theta)$ is $C$--symmetric, then
$C=A_\psi^{\theta}C_{\theta}$ for some $\psi\in
H^\infty$ such that $A_\psi^{\theta}$ is unitary.
\end{corollary}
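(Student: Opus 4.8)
The plan is to obtain this as an immediate consequence of Theorem \ref{c4}. The hypothesis asserts that every bounded truncated Toeplitz operator on $K_\theta$ is $C$--symmetric; in particular, for every $\varphi\in L^\infty$ the operator $A_\varphi^\theta\in\mathcal{T}(\theta)$ is $C$--symmetric, which means $C A_\varphi^\theta C=(A_\varphi^\theta)^*$. Recalling that $(A_\varphi^\theta)^*=A_{\overline{\varphi}}^\theta$ and that $C^2=I_{K_\theta}$, this identity can be rewritten as $A_{\overline{\varphi}}^\theta C=C A_\varphi^\theta$ for all $\varphi\in L^\infty$.

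Next I would specialize this to $\varphi\in H^\infty$ by applying the identity just obtained with $\overline{\varphi}\in L^\infty$ in place of $\varphi$; this produces $A_\varphi^\theta C=C A_{\overline{\varphi}}^\theta$ for every $\varphi\in H^\infty$, which is precisely condition (1) of Theorem \ref{c4}. Invoking that theorem, condition (3) then holds: there is $\psi\in H^\infty$ such that $C=A_\psi^\theta C_\theta$ and $A_\psi^\theta$ is unitary, which is exactly the assertion of the corollary.

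There is essentially no obstacle here beyond bookkeeping; the one point requiring a little care is the direction of conjugation when passing between $\varphi$ and $\overline{\varphi}$, together with the observation that we only need the $C$--symmetry of $A_\varphi^\theta$ for symbols $\varphi$ ranging over the conjugates of $H^\infty$, which is guaranteed since $\mathcal{T}(\theta)$ contains $A_\varphi^\theta$ for all $\varphi\in L^\infty$. Alternatively one could use only the $C$--symmetry of the single operator $A_z^\theta$, together with $(A_z^\theta)^*=A_{\overline{z}}^\theta$, to land directly in condition (2) of Theorem \ref{c4}; this route is equally short and avoids quantifying over symbols altogether.
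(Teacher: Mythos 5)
Your proposal is correct and matches the paper's intent: the corollary is stated as an immediate consequence of Theorem \ref{c4}, obtained exactly as you describe by noting that $C$--symmetry of the truncated Toeplitz operators (in particular of $A_z^\theta$, giving condition (2), or of the $A_{\overline{\varphi}}^\theta$ with $\varphi\in H^\infty$, giving condition (1)) puts you in the hypotheses of that theorem, whose condition (3) is the desired conclusion. No genuinely different route is involved.
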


For a complete description of unitary operators from $\mathcal{T}(\theta)$ see \cite[Proposition 6.5]{SED}.


\begin{thebibliography}{99}
\bibitem{berc}
{Bercovici H}. {Operator Theory and Arithmetic in $H^{\infty}$}.
Mathematical Surveys and Monographs, vol. 26, Providence: Amer Math Soc, RI, 1988


\bibitem{CKP}
{C\^{a}mara C, Kli\'s-Garlicka K and Ptak M}.
{Asymmetric truncated Toeplitz operators and conjugations}.
Filomat, to appear.

\bibitem{CFT}
Chevrot N, Frician E, Timotin D.
{The characteristic function of a complex symmetric contraction}.
Proc Amer Math Soc, 2007, 135: 2877--2886


\bibitem{CGW}
{Cima J A, Garcia S R,  Ross W T, et al}.
{Truncated Toeplitz operators:  spatial isomorphism, unitary equivalence, and similarity}.
 Indiana Univ Math J, 2010, 59: 595--620



\bibitem{FM}
{Fricain E,  Mashreghi J}.
{The theory of $\mathcal{H}(b)$ spaces, Vol. 1}.
Cambridge: Combridge University Press,  2016

\bibitem{GMR}
 Garcia S R, Mashreghi J, Ross W T.
{Introduction to model spaces and their operators.}
Cambridge: Cambridge University Press, 2016

\bibitem{GP}
{Garcia S R,  Putinar M}. {Complex symmetric operators and applications}. Trans Amer Math Soc, 2006, 358: 1285--1315
\bibitem{GPP}
{Garcia S R, Prodan E, Putinar M}. {Mathematical and physical aspects of complex symmetric operators}. J Phys A: Math Theor, 2014, 47: 353001

\bibitem{GP2}
{Garcia S R, Putinar M}. {Complex symmetric operators and applications II}. Trans Amer Math Soc, 2007, 359: 3913--3931




\bibitem{KoLee18}
 {Ko E,   Lee J E}.
  {Remark on complex symmetric operator matrices}.
Linear and Multilinear Algebra, 2018, DOI: 10.1080/03081087.2018.1450350



\bibitem{MAR}
 { Mart{\'{i}}nez-Ave\~{n}dano R A, Rosenthal P}.
  {An Introduction to Operators on the Hardy-Hilbert Space}.
New York: Springer Science+Business Media, LLC, 2007

\bibitem{NF}
{Sz.-Nagy B, Foias C F,  Bercovici H, et al}.
{Harmonic Analysis of Operators on a Hilbert Space}, second edition, London: Springer, 2010

\bibitem{RR}
{ Radjavi H,  Rosenthal P}. {Invariant Subspaces}.
 New York: Springer, 1973

\bibitem{Sarason}
{Sarason D}.
{Algebraic properties of truncated Toeplitz operators}. Oper Matrices, 2007, 1: 491--526

\bibitem{SED}
{ Sedlock N A}.
{Algebras of truncated Toeplitz operators}. Oper Matrices, 2011, 5: 309--326
\end{thebibliography}
\end{document}